\documentclass[12pt,reqno,oneside]{amsart}
\usepackage{amsmath,amsthm,amsfonts,amssymb}
\usepackage[mathscr]{eucal}
\usepackage{bbm}
\usepackage{indentfirst}
\usepackage{url}

\theoremstyle{plain}
\newtheorem{teo}{Theorem}[section]

\newtheorem{lem}[teo]{Lemma}

\theoremstyle{definition}

\newtheorem{obs}[teo]{Observation}

\numberwithin{equation}{section}

\def\bb1{{\mathbbm{1}}}

\setlength{\textwidth}{160mm}
\setlength{\textheight}{210mm}
\setlength{\oddsidemargin}{6mm}
\setlength{\evensidemargin}{6mm}
\setlength{\topmargin}{5mm}

\begin{document}
	\baselineskip=22pt
	\title[Coverage of complete graphs with the frog model]{Critical Conditions for the Coverage of Complete Graphs with the Frog Model}
	\author{Gustavo~O.~de Carvalho}
	\author{F\'abio~P.~Machado}
	\address[F\'abio~P.~Machado]
	{Institute of Mathematics and Statistics
		\\ University of S\~ao Paulo \\ Rua do Mat\~ao 1010, CEP
		05508-090, S\~ao Paulo, SP, Brazil - fmachado@ime.usp.br}
	\noindent
	\address[Gustavo~O.~de Carvalho]
	{Institute of Mathematics and Statistics
		\\ University of S\~ao Paulo \\ Rua do Mat\~ao 1010, CEP
		05508-090, S\~ao Paulo, SP, Brazil - gustavoodc@ime.usp.br}
	\noindent
	\thanks{Research supported by CAPES (88887.676435/2022-00), FAPESP (23/13453-5)}
	\keywords{complete graph, coverage, frog model, random walks system.}
	\subjclass[2020]{60K35, 05C81}
	
	\date{\today}

\begin{abstract}
We consider a system of interacting random walks known as the frog model. Let $\mathcal{K}_n=(\mathcal{V}_n,\mathcal{E}_n)$ be the complete graph with $n$ vertices and $o\in\mathcal{V}_n$ be a special vertex called the root. Initially, $1+\eta_o$ active particles are placed at the root and $\eta_v$ inactive particles are placed at each other vertex $v\in\mathcal{V}_n\setminus\{o\}$, where $\{\eta_v\}_{v\in \mathcal{V}_n}$ are i.i.d. random variables.
At each instant of time, each active particle may die with probability $1-p$.
Every active particle performs a simple random walk on $\mathcal{K}_n$ until the
moment it dies, activating all inactive particles it hits along its path. Let $V_\infty(\mathcal{K}_n,p)$ be the total number of visited vertices by
some active particle up to the end of the process, after all active particles have died. In this paper, we show that $V_\infty(\mathcal{K}_n,p_n)\geq (1-\epsilon)n$ with high probability for any fixed $\epsilon>0$ whenever $p_n\rightarrow 1$. Furthermore, we establish the critical growth rate of $p_n$ so that all vertices are visited. Specifically, we show that if $p_n=1-\frac{\alpha}{\log n}$, then $V_\infty(\mathcal{K}_n,p_n)=n$ with high probability whenever $0<\alpha<E(\eta)$ and $V_\infty(\mathcal{K}_n,p_n)<n$ with high probability whenever $\alpha>E(\eta)$.
\end{abstract}

\maketitle

\section{Introduction and results}

We study the frog model, which can be described by the following rules. Initially, particles are placed at the vertices of a connected graph $\mathcal{G}$. At time zero, all particles are inactive, except for those at a special selected vertex from $\mathcal{G}$ called the root, which are active. Active particles perform independent, simple, nearest-neighbor, discrete time random walks on $\mathcal{G}$, activating any inactive particles found on the vertices visited by their walk. Inactive particles remain on their initial vertex until activated (if it happens). This model is often associated with the dynamics of the spread of a rumor or an infection in a population. A formal definition of the frog model can be found in \cite{phase_transition}.

Several variations of the frog model have been studied. Most of the results are based on cases where $\mathcal{G}$ is infinite, usually the hypercubic lattice $\mathbb{Z}^d$ or the regular tree $\mathbb{T}^d$. Part of the work has focused on finding conditions for recurrence and transience, i.e., conditions for the root to be visited infinitely often \cite{recorrencia,recorrencia2,recorrencia3,recorrencia_drift3,recorrencia_drift4,recorrencia_drift5}. Another area of interest is describing the set of visited vertices and its asymptotic shape \cite{shape_theorem,shape_theorem_drift,drift,shape_theorem3}. For particles having a finite random lifetime, specifically those with a geometric distribution, some papers focus on studying conditions for the survival of the frog model \cite{phase_transition,phase_transition2,phase_transition5}. In finite graphs, there are works that deal with cases where particles have fixed finite lifetime \cite{vida_fixa2,cover_time,vida_fixa}, as well as some other lifetime schemes \cite{grafo_completo_fabio,vertice_visitado,grafo_completo_elcio,coverage}.

We consider the geometric lifetime version of the frog model. At each instant of time, independently of everything else, each active particle survives with probability $p\in(0,1)$ or dies otherwise, being permanently removed from the model. This means that the lifetime of active particles follow a geometric distribution supported on $\{0,1,...\}$ with parameter $1-p$.

Let $\mathcal{K}_n$ be the $n$-complete graph, i.e., $\mathcal{K}_n=(\mathcal{V}_n,\mathcal{E}_n)$ where $\mathcal{V}_n$ is the set of vertices with $|\mathcal{V}_n|=n$ and $\mathcal{E}_n=\{\{x,y\}|x,y\in\mathcal{V}_n,x\neq y\}$ is the set of undirected edges linking every pair of distinct vertices. Let $o\in\mathcal{V}_n$ be a special vertex called the root of $\mathcal{K}_n$. Initially, we consider that each vertex $v\in\mathcal{V}_n\setminus\{o\}$ has $\eta_v$ inactive particles and $o$ has $1+\eta_o$ active particles, where $\{\eta_v\}_{v\in\mathcal{V}_n}$ are i.i.d. with the same distribution as a variable $\eta$. The extra particle at the root is simply a way to ensure that the frog model does not end right from the start, but all of our results also hold without this extra particle if we consider the conditional probability on the event $\{\eta_o\geq 1\}$.

Excluding trivial cases where $P(\eta=\infty)>0$ or $p=1$, there are finite particles which could be activated since $\mathcal{V}_n$ is finite and each particle has a finite lifetime (given by a geometric distribution). Thus, there is a finite instant in which the process is over, i.e., there are no more active particles alive. Let $V_\infty(\mathcal{K}_n,p)$ be the number of visited vertices when the frog model is over. When either of the trivial cases above happens, it could be the case that the process is not over at any instant of time and all vertices are visited; in this case we simply say that $V_\infty(\mathcal{K}_n,p):=n$.

When there is exactly one particle per vertex ($\eta \equiv 1$), \cite[Theorem 1.2]{coverage} shows that both $V_\infty(\mathcal{K}_n,p)\leq c \log n$ and $V_\infty(\mathcal{K}_n,p)\geq c'n$ have positive asymptotic probabilities for $p>1/2$ and suitable constants $c>0$ and $c'\in(0,1)$. The following theorem shows that when $p=p_n$ such that $p_n\rightarrow 1$, w.h.p.\footnote{We say that a sequence of events $(E_n)_{n\in \mathbb{N}}$ happens with high probability (w.h.p.) when $\lim_{n\to \infty} P(E_n)=1$.} only the case $V_\infty(\mathcal{K}_n,p)\geq c'n$ is possible. More than that, the same goes for every $c'\in(0,1)$, therefore w.h.p. any arbitrary large proportion of vertices is visited.

\begin{teo}\label{teo:alta_proporcao}
    Let $\eta$ be such that $P(\eta=0)<1$. If $(p_n)_{n\in \mathbb{N}}$ is a sequence such that $\lim_{n \to \infty}p_n=1$, then for any $\epsilon \in (0,1)$,
    \[\lim_{n\to \infty}P(V_\infty(\mathcal{K}_n,p_n)\geq (1-\epsilon)n)=1.\]
\end{teo}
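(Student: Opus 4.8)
The plan is to run the frog model as a sequential exploration and to dominate it from below by a Galton--Watson process whose mean offspring diverges, so that its extinction probability vanishes. Since the final set of visited vertices does not depend on the order in which active particles are run, process active particles one at a time, maintaining the set $\mathcal{D}$ of vertices discovered so far (initialized as $\mathcal{D}=\{o\}$). Call a \emph{token} an active particle that has not yet been run; start with the single guaranteed extra particle at $o$, so there is exactly one token initially. To process a token located at $v\in\mathcal{D}$: run its walk, which visits vertices $X_1,X_2,\dots$ (with $X_{i+1}$ uniform on $\mathcal{V}_n\setminus\{X_i\}$) and dies after $T\sim\mathrm{Geom}(1-p_n)$ steps; for each $i\le T$ with $X_i\notin\mathcal{D}$, add $X_i$ to $\mathcal{D}$ and, if $\eta_{X_i}\ge 1$, create one new token at $X_i$. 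Then $|\mathcal{D}|$ never exceeds the number of vertices visited in the true model, and only finitely many tokens are ever created (at most one per discovered vertex).

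Fix $\epsilon\in(0,1)$, put $\delta:=\epsilon/2$ and $q:=P(\eta\ge 1)$, which is positive since $P(\eta=0)<1$. For $n$ large, as long as $|\mathcal{D}|<(1-\epsilon)n$, at every step of every walk the conditional probability (given the past) of landing on a vertex outside $\mathcal{D}$ is at least $\delta$, and each freshly discovered vertex independently carries at least one particle with probability $q$. Revealing the walks one step at a time, the number of children produced by processing a single token therefore stochastically dominates $\mathrm{Binom}(T,\delta q)$ with $T\sim\mathrm{Geom}(1-p_n)$ independent, and this bound is uniform in the history so long as the threshold $(1-\epsilon)n$ has not been reached. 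Hence the token-count process can be coupled so that, on the event that the exploration stays below the threshold forever, its generation sizes dominate those of a Galton--Watson process $Z_n$ with $Z_n^{(0)}=1$ and offspring law $\mathrm{Binom}(T,\delta q)$, whose mean is $\mu_n=\dfrac{p_n\,\delta q}{1-p_n}\xrightarrow[n\to\infty]{}\infty$.

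Since $\mu_n>1$ for $n$ large, $Z_n$ is supercritical with extinction probability $\rho_n<1$, and $\rho_n\to 0$. Indeed $\rho_n$ is the smallest fixed point of the offspring generating function $f_n(s)=E\big[(1-\delta q(1-s))^T\big]=\dfrac{1-p_n}{1-p_n+p_n\,\delta q\,(1-s)}$; if $\rho_n\ge c$ for some $c>0$ along a subsequence, then $f_n(c)\ge f_n(\rho_n)=\rho_n\ge c$, yet $f_n(c)\to 0$ as $p_n\to 1$ because $1-c>0$, a contradiction. To finish, observe that on the event $\{V_\infty(\mathcal{K}_n,p_n)<(1-\epsilon)n\}$ one has $|\mathcal{D}|<(1-\epsilon)n$ throughout, so the domination holds for all time; as only finitely many tokens are ever created, $Z_n$ must go extinct on this event. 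Therefore $P\big(V_\infty(\mathcal{K}_n,p_n)<(1-\epsilon)n\big)\le\rho_n\to 0$, which is exactly the assertion.

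The main obstacle is the coupling bookkeeping behind the domination: children of tokens processed within the same generation are not independent (they compete for the shrinking pool of undiscovered vertices) and the per-step probability of discovering a new vertex is only bounded below rather than fixed. One must therefore argue carefully, exposing the walks and the marks $\{\eta_v\}$ one at a time, that the generation sizes of the true token process dominate those of the abstract process $Z_n$ for as long as the threshold is not crossed. Once that is in place, the divergence of $\mu_n$ — and hence $\rho_n\to 0$ — is routine.
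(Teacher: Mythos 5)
Your proof is correct, but it takes a genuinely different route from the paper's. The paper proves the theorem via its Lemma 3.1 in three stages: (a) run the process for $k_1=\lfloor cn\rfloor-1$ rounds and use a law-of-large-numbers comparison (plus a separate branching-process survival lemma, Lemma 2.4) to show that a linear number $(1+\epsilon)bn$ of particles become active; (b) show via Chebyshev (Lemma 2.3) that these particles jointly take on the order of $bn(1-p_n)^{-1}\gg n$ further steps; (c) couple with the coupon collector's problem (Lemma 2.2) to conclude that this many steps covers $(1-\epsilon)n$ vertices. You instead compress everything into a single branching comparison: the token exploration dominates a Galton--Watson process with offspring law $\mathrm{Binom}(T,\delta q)$ whose mean diverges, so its extinction probability $\rho_n\to 0$, and the event $\{V_\infty<(1-\epsilon)n\}$ forces extinction under the coupling. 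This is shorter and avoids the coupon collector and the lifetime-counting lemma entirely; interestingly, your $\rho_n\to 0$ computation is essentially the paper's Lemma 2.4, which the paper uses only to guarantee survival for $cn$ rounds rather than to carry the whole argument. What the paper's longer route buys is quantitative output that is reused later: Lemma 3.1 also yields a lower bound on the number of \emph{active} particles at the moment the $(1-\epsilon)n$ threshold is reached, which is the input needed for Theorem 1.2(ii); your domination establishes coverage of the $(1-\epsilon)$ fraction but does not track how many particles remain alive at that time. Two small points to tidy up: the chain ``$f_n(c)\geq f_n(\rho_n)=\rho_n\geq c$'' is written backwards (since $f_n$ is increasing and $c\leq\rho_n$ one has $f_n(c)\leq f_n(\rho_n)$); the correct justification that $f_n(c)\geq c$ for $c\leq\rho_n$ is convexity of $f_n$ together with $f_n(0)\geq 0$ and the fixed points at $\rho_n$ and $1$, after which your contradiction with $f_n(c)\to 0$ goes through. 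And the coupling bookkeeping you flag at the end does need to be written out (monotone coupling of each token's child count with an independent $\mathrm{Binom}(T,\delta q)$ up to the stopping time at which either the threshold is crossed or the exploration dies), but this is standard and not a gap in the idea.
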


Theorem \ref{teo:alta_proporcao} may appear to suggest that all vertices of $\mathcal{K}_n$ are visited when $p_n\rightarrow 1$, but this is not necessarily the case. Actually, there is a significant difference between visiting a large proportion of vertices and visiting all vertices, similar to what occurs in another process that will later be used in a coupling with the frog model: the coupon collector's problem. As the collection increases, it becomes harder and harder to find new coupons in the coupon collector's problem, thus obtaining $(1-\epsilon)n$ different coupons demand roughly $cn$ draws, where $c=c(\epsilon)$ is a constant; while obtaining all $n$ different coupons demand roughly $n\log n$ draws. 

Next theorem shows the rate at which $p_n$ must grow in order to have w.h.p. all vertices visited. For any $x\in\mathbb{R}$, let $x^+:=\max\{0,x\}$ be its positive part.

\begin{teo}\label{teo:transicao_fase}
    Let $\eta$ be such that $P(\eta=0)<1$. Consider that $p_n=(1-\frac{\alpha}{\log n})^+$.
    
\begin{itemize}
    \item (i) If $\alpha>E(\eta)$, then $\lim_{n \to \infty}P( V_\infty(\mathcal{K}_n,p_n)=n)=0$.
    \item (ii) If $0<\alpha<E(\eta)$, then $\lim_{n \to \infty}P(V_\infty(\mathcal{K}_n,p_n)=n)=1$.
\end{itemize}
\end{teo}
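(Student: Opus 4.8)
The plan is to couple the frog model with a classical coupon collector's problem. List the particles that are ever activated in order of activation and attach to the $k$-th one an independent geometric lifetime $G_k$ with parameter $1-p_n$ on $\{0,1,\dots\}$; let $N$ be the (random) number of activated particles, so that the total number of random-walk steps performed over the whole process is $R=\sum_{k=1}^{N}G_k$. Every walker sits at an already-visited vertex and jumps to a uniformly chosen one of the remaining $n-1$ vertices, so --- conditionally on the past and on the scheduling --- the next step reveals a new vertex with probability $(n-1-D)/(n-1)$, where $D$ is the current number of visited non-root vertices. Consequently the sequence $(D_m)_{m\ge0}$ of visited-non-root counts is a Markov chain with exactly the transition law of the distinct-coupon count in coupon collecting with $n-1$ coupons, and $V_\infty(\mathcal K_n,p_n)=1+D_R$. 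Writing $\tau$ for the coupon-collector completion time, this gives $\{V_\infty=n\}=\{R\ge\tau\}$, and since $\tau=(1+o(1))\,n\log n$ with high probability by a second-moment estimate, everything comes down to comparing $R$ with $n\log n$. One mild point: $N$ and the $G_k$ are not independent (a short-lived walker activates fewer particles), but this is harmless because on $\{N\ge\ell\}$ we have the deterministic inequality $\sum_{k=1}^{N}G_k\ge\sum_{k=1}^{\ell}G_k$, so a lower bound on $N$ transfers to one on $R$, and an upper bound on $N$ to one on $R$.

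For part (i), note that $\alpha>E(\eta)$ forces $E(\eta)<\infty$, hence $\eta<\infty$ a.s.\ and $N<\infty$ a.s. On the event $\{V_\infty=n\}$ every particle is activated, so $N=1+\sum_{v}\eta_v$ and $R=R_{\max}:=\sum_{k=1}^{1+\sum_v\eta_v}G_k$, and since $V_\infty=n$ forces $R\ge\tau$ we get $R_{\max}\ge\tau$ there. Now $\sum_v\eta_v\le(E(\eta)+\delta)n$ w.h.p.\ by the weak law of large numbers, and Chebyshev's inequality applied to the resulting sum of $\Theta(n)$ i.i.d.\ geometrics, each of mean $\mu_n:=p_n/(1-p_n)\le\tfrac{\log n}{\alpha}$, gives $R_{\max}\le(E(\eta)+2\delta)\tfrac{n\log n}{\alpha}$ w.h.p.; choosing $\delta$ with $(E(\eta)+2\delta)/\alpha<1-\epsilon$ yields $R_{\max}\le(1-\epsilon)\,n\log n$ w.h.p. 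Since also $\tau\ge(1-\epsilon)\,n\log n$ w.h.p., the event $\{V_\infty=n\}\subseteq\{R_{\max}\ge\tau\}$ has vanishing probability.

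For part (ii), first take $E(\eta)<\infty$. By Theorem~\ref{teo:alta_proporcao} (applicable since $p_n\to1$) the set $A_\infty$ of visited vertices satisfies $|A_\infty|\ge(1-\epsilon)n$ w.h.p. Splitting off large values via $\sum_{v\notin A_\infty}\eta_v\le M\,(n-|A_\infty|)+\sum_v\eta_v\mathbbm{1}\{\eta_v>M\}$ and choosing first $M$ large and then $\epsilon$ small, the law of large numbers gives $N=1+\sum_{v\in A_\infty}\eta_v\ge(E(\eta)-\epsilon_1)n$ w.h.p., with $\epsilon_1$ as small as desired. The deterministic inequality then yields $R\ge\sum_{k=1}^{\lfloor(E(\eta)-\epsilon_1)n\rfloor}G_k\ge(1-\delta_1)(E(\eta)-\epsilon_1)\tfrac{n\log n}{\alpha}$ w.h.p.\ by Chebyshev, using $\mu_n=(1+o(1))\tfrac{\log n}{\alpha}$. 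Since $\alpha<E(\eta)$, we may fix $\epsilon_1,\delta_1$ so small that $(1-\delta_1)(E(\eta)-\epsilon_1)/\alpha>1+2\delta$ for some $\delta>0$; then $R\ge(1+\delta)n\log n$ w.h.p.\ (absorbing the $1+o(1)$), while $\tau\le(1+\tfrac{\delta}{2})n\log n$ w.h.p., so $R>\tau$ w.h.p.\ and $V_\infty=n$ w.h.p. The case $E(\eta)=\infty$ reduces to the previous one by the monotone coupling in the particle configuration: replacing each $\eta_v$ by $\min(\eta_v,M)$ only decreases $V_\infty$, and $E(\min(\eta,M))>\alpha$ once $M$ is large, so the finite-mean case applies.

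The main obstacle is the first paragraph: verifying rigorously that, despite walkers revisiting vertices and moving simultaneously, the visited-non-root count is exactly coupon-collector-distributed, and organizing the bookkeeping so that $R=\sum_{k\le N}G_k$ with i.i.d.\ $G_k$. Once that coupling is in place the rest is routine --- the weak law of large numbers for $\sum_v\eta_v$ and for $\sum_v\eta_v\mathbbm{1}\{\eta_v>M\}$, Chebyshev's inequality for sums of $\Theta(n)$ geometrics and for the coupon-collector time, and Theorem~\ref{teo:alta_proporcao} used as a black box.
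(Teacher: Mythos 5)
Your proposal is correct, and the two parts should be judged separately. For part (i) it is essentially the paper's argument: the paper formalizes your ``$R=R_{\max}$ on $\{V_\infty=n\}$'' step by agglutinating all $1+\sum_v\eta_v$ particles at the root of an auxiliary process and bounding their combined lifetime (Lemma \ref{lema:vida_total}(i)) against the coupon-collector lower bound (Lemma \ref{lema:ccp_concentrado}); your version reaches the same comparison more directly. Two small points there: you need strictly separated constants (e.g.\ $R_{\max}\le(1-2\epsilon)n\log n$ against $\tau\ge(1-\epsilon)n\log n$) to conclude $R_{\max}<\tau$ rather than $R_{\max}\le\tau$; and the ``one draw at a time'' bookkeeping you flag as the main obstacle is exactly what the paper's auxiliary process of Section 2 supplies, including the extension of the draw sequence past the end of the process via extra particles so that $\tau$ lives on the same probability space. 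For part (ii), however, your route is genuinely different from, and lighter than, the paper's. The paper's engine is Lemma \ref{lema:principal}, whose point is to exhibit a single round $k_2$ at which at least $(1+\epsilon)\alpha n$ particles are \emph{simultaneously alive}; this forces a special scheduling rule (particles woken after round $k_1$ wait their turn, so none of them can have died by round $k_2$) and then memorylessness of the geometric lifetime to feed fresh i.i.d.\ remaining lifetimes into Lemma \ref{lema:vida_total}(ii). You instead observe that the total step budget $R$ is the sum of the \emph{full} lifetimes of all particles ever activated, so it is irrelevant when they wake or whether they are alive at a common instant: a lower bound on the number $N$ of activated particles --- obtained from Theorem \ref{teo:alta_proporcao} used as a black box together with your truncation-at-$M$ control of $\sum_{v\notin A_\infty}\eta_v$ --- transfers to $R$ through the deterministic inequality $\sum_{k\le N}G_k\ge\sum_{k\le\ell}G_k$ on $\{N\ge\ell\}$, and the dependence between $N$ and the lifetimes is absorbed by a union bound. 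This buys a shorter proof of part (ii), at the price of invoking Theorem \ref{teo:alta_proporcao}; the paper cannot take this shortcut for free, since its proof of Theorem \ref{teo:alta_proporcao} is itself Lemma \ref{lema:principal}, so the active-particle bookkeeping has to be done somewhere in either treatment.
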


\begin{obs}
When $E(\eta)=+\infty$, Theorem \ref{teo:transicao_fase} part $(ii)$ applies for every $\alpha>0$ and we have that $\lim_{n \to \infty}P(V_\infty(\mathcal{K}_n,(1-\frac{\alpha}{\log n})^+)=n)=1$.
\end{obs}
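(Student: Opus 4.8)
The plan is to reduce the whole question to a coupon‑collector estimate for the simple random walk on $\mathcal{K}_n$. Using that the killed frog model is abelian, I would reveal the trajectories of the active particles one at a time: the root starts with $1+\eta_o$ particles, each newly visited vertex $v$ contributes $\eta_v$ new particles, and each particle, once activated, performs an independent geometrically killed simple random walk on $\mathcal{K}_n$. Write $K=K_n$ for the total number of particles ever activated and $N=N_n$ for the total number of jumps performed by all of them, so that
\[
K = 1 + \sum_{v \in \mathcal{C}_n} \eta_v, \qquad N = \sum_{j=1}^{K} L_j,
\]
where $L_1,L_2,\ldots$ are i.i.d.\ with the $\mathrm{Geom}(1-p_n)$ law on $\{0,1,\ldots\}$ (attached to particles in order of activation) and $\mathcal{C}_n$ is the set made up of $o$ together with the destinations of the first $N$ jumps, so that $V_\infty(\mathcal{K}_n,p_n)=|\mathcal{C}_n|$. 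One has $E(L_j)=p_n/(1-p_n)=\frac{\log n}{\alpha}-1\sim\frac{\log n}{\alpha}$, and the number of jumps needed to visit every vertex of $\mathcal{K}_n$ is concentrated around $n\log n$: starting from any already‑visited vertex, $m$ jumps fail to visit some vertex w.h.p.\ when $m\le(1-\delta)n\log n$ and visit every vertex w.h.p.\ when $m\ge(1+\delta)n\log n$. Heuristically $N\approx\frac{E(\eta)}{\alpha}\,|\mathcal{C}_n|\log n$ and $|\mathcal{C}_n|\approx n$ by Theorem~\ref{teo:alta_proporcao}, so the process should cover $\mathcal{K}_n$ exactly when $\frac{E(\eta)}{\alpha}>1$; turning this heuristic into the two statements is the content of the proof.

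For part~(i), suppose $\alpha>E(\eta)$, so $E(\eta)<\infty$. Crudely $K\le 1+\sum_{v\in\mathcal{V}_n}\eta_v$, which by the law of large numbers is $\le(1+\epsilon)nE(\eta)$ w.h.p. Hence $N\le\sum_{j=1}^{\lceil(1+\epsilon)nE(\eta)\rceil}L_j$, and a concentration bound for a sum of i.i.d.\ geometric variables gives $N\le(1+\epsilon)\tfrac{E(\eta)}{\alpha}n\log n\le(1-\delta)n\log n$ w.h.p., where $\epsilon$ is chosen small (using $E(\eta)/\alpha<1$) and $\delta>0$. Since visiting every vertex is monotone in the number of jumps, on $\{V_\infty(\mathcal{K}_n,p_n)=n\}\cap\{N\le(1-\delta)n\log n\}$ the first $(1-\delta)n\log n$ jumps already visit every vertex, so
\[
P\big(V_\infty(\mathcal{K}_n,p_n)=n\big)\le P\big(N>(1-\delta)n\log n\big)+P\big(\text{$\lfloor(1-\delta)n\log n\rfloor$ walk steps cover $\mathcal{K}_n$}\big),
\]
and both terms vanish.

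Part~(ii) is the same scheme run in the other direction, and it is here that Theorem~\ref{teo:alta_proporcao} is essential. Fix small $\epsilon>0$; w.h.p.\ at most $\epsilon n$ vertices of $\mathcal{K}_n$ are unvisited, so $K\ge 1+\sum_{v\in\mathcal{V}_n}\eta_v-\big(\text{sum of the $\lceil\epsilon n\rceil$ largest }\eta_v\big)$. Bounding the top sum by $\sum_{v}\eta_v\mathbbm{1}\{\eta_v>M\}+\epsilon n M$ and combining the law of large numbers with integrability of $\eta$ (take $M$ large, then $\epsilon$ small; if $E(\eta)=\infty$ replace $\eta$ by $\eta\wedge M$ everywhere, a variable with arbitrarily large finite mean), I get $K\ge(1-\epsilon')nE(\eta)$ w.h.p.\ with $\epsilon'$ as small as wanted. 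Then $N\ge\sum_{j=1}^{\lfloor(1-\epsilon')nE(\eta)\rfloor}L_j\ge(1+\delta)n\log n$ w.h.p., again by geometric concentration and choosing $\epsilon'$ small enough that $(1-\epsilon')E(\eta)/\alpha>1$. By the same monotonicity,
\[
P\big(V_\infty(\mathcal{K}_n,p_n)<n\big)\le P\big(N<(1+\delta)n\log n\big)+P\big(\text{$\lceil(1+\delta)n\log n\rceil$ walk steps do not cover $\mathcal{K}_n$}\big)\longrightarrow 0.
\]

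The routine ingredients are the laws of large numbers and the Chernoff‑type bounds for sums of i.i.d.\ geometric variables. The parts I expect to require real care are: (a) making the reduction precise — the abelian property of the killed frog model and the identity $V_\infty(\mathcal{K}_n,p_n)=|\mathcal{C}_n|$ with $N$ as described, together with the fact that the coupon‑collector estimates survive the multi‑particle structure because every jump originates from an already‑visited vertex; (b) the fact that $N$ is a stopping time built from the same randomness as the walk, which is why the argument has to go through a ``monotone sandwich between deterministic thresholds'' rather than substituting $N$ directly into a coupon‑collector estimate; and (c) the truncation/order‑statistics step bounding $\sum_{v\in\mathcal{C}_n}\eta_v$ from below in part~(ii), i.e.\ excluding the possibility that the unvisited vertices carry most of the $\eta$‑mass — this is the favourable direction and is controlled by integrability of $\eta$, but it still needs to be written out.
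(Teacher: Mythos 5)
Your proposal is correct, and the part of it that actually addresses the Observation --- the reduction of the case $E(\eta)=\infty$ to the finite-mean case by replacing $\eta$ with $\eta\wedge M$, invoking monotone convergence to get $E(\eta\wedge M)>\alpha$ for large $M$, and using stochastic monotonicity of the model in $\eta$ --- is exactly the paper's argument (the paper truncates via $\eta^{(m)}=\eta\mathbbm{1}_{\{\eta\le m\}}+m\mathbbm{1}_{\{\eta>m\}}$, which is the same variable). Where you genuinely diverge is in how you re-derive part $(ii)$ itself. You do a single global accounting: total activated mass $K=1+\sum_{v\in\mathcal{C}_n}\eta_v$, total jump budget $N=\sum_{j\le K}L_j$, and a monotone sandwich against deterministic thresholds around $n\log n$. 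The paper instead stages the argument through its auxiliary process: first grow to order $n$ active particles in the first $\lfloor cn\rfloor$ rounds by branching-process domination (Lemma 2.5 and the first half of Lemma 3.1), then reach $\lceil(1-\epsilon)n\rceil$ visited vertices via the partial coupon-collector bound (Lemma 2.2), and only then harvest $\sum_{i=\lfloor cn\rfloor}^{\lceil(1-\epsilon)n\rceil}\eta_i$ fresh particles over a \emph{deterministic} index range. This staging buys two things you have to pay for by hand: (a) the injected ``extra particles'' of the auxiliary process make the jump sequence infinite, so the cover time is an honest coupon-collector variable and there is no stopping-time entanglement between $N$ and $\mathcal{C}_n$; and (b) because the harvested $\eta_i$ are indexed deterministically, the paper needs only the plain law of large numbers, whereas your lower bound on $\sum_{v\in\mathcal{C}_n}\eta_v$ runs over a random set and forces the order-statistics/truncation estimate $\sum_{\text{top }\lceil\epsilon n\rceil}\eta_v\le\sum_v\eta_v\mathbbm{1}\{\eta_v>M\}+\epsilon nM$. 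Your version is a valid worst-case bound and closes correctly under integrability, but note that your part $(ii)$ still leans on Theorem 1.1, whose proof in the paper is precisely the staged construction you are trying to bypass, so the overall economy is comparable.
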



Theorem $\ref{teo:transicao_fase}$ relates to \cite[Proposition 1.1]{vida_fixa2}, where the authors address the frog model on $\mathcal{K}_n$ with fixed finite lifetime. They study the  susceptibility of $\mathcal{K}_n$ (among other graphs), essentially determining the shortest particles’ lifespan required for the entire graph to be visited by active particles. In addition to having a different particle lifetime scheme, that paper focuses on cases where $\eta$ are Poisson distributed. The approach used there does not apply to our case.

Simple coupling techniques show that $P(V_\infty(\mathcal{K}_n,p)=n)$ is non-decreasing in $p$. So, Theorem \ref{teo:transicao_fase} is also useful for several sequences $(p_n)_{n\in\mathbb{N}}$, depending on their asymptotic behavior. We have that $\lim_{n\to\infty}P(V_\infty(\mathcal{K}_n,p)=n)=0$ whenever $p_n\leq 1-\frac{\alpha}{\log n}$ for some $\alpha>E(\eta)$ and sufficiently large $n$; we also have that $\lim_{n \to \infty}P(V_\infty(\mathcal{K}_n,p_n)=n)=1$ whenever $p_n\geq 1-\frac{\alpha}{\log n}$ for some $0<\alpha<E(\eta)$ and sufficiently large $n$.

It is also interesting to note what happens when $(p_n)_{n\in \mathbb{N}}$ is a sequence that slowly converges to 1. When $\lim_{n\to \infty}p_n=1$ and $p_n\leq 1-\frac{\alpha}{\log n}$ for some $\alpha>E(\eta)$ and sufficiently large $n$, then $(1-\epsilon)n\leq V_\infty(\mathcal{K}_n,p_n)<n$ w.h.p. for any $\epsilon\in(0,1)$ by Theorems $\ref{teo:alta_proporcao}$ and $\ref{teo:transicao_fase}$. In this case, there is a non-empty set of unvisited vertices at the end of the process, but its cardinality is smaller than any pre-established proportion.

The remainder of this article is organized as follows. In Section \ref{sec:aux}, we provide some useful lemmas, as well as definitions and results about models that can be coupled with the frog model on $\mathcal{K}_n$. The proofs of all theorems are presented in Section \ref{sec:demo_kn}.

\section{Auxiliary results}\label{sec:aux}

\subsection{The coupon collector's problem}

Consider the classical coupon collector's problem. There are $n$ coupons and a collector who acquires a random and uniformly selected coupon at each instant of time. Let $\tau_i$ be the number of draws needed to obtain $i\in\{1,...,n\}$ different coupons, and $\tau:=\tau_{n}$ be the number of draws needed to complete the entire collection. 

Note that $\tau_{j+1}-\tau_j\sim Geo_1(\frac{n-j}{n})$\footnote{We let $Geo_0(.)$ and $Geo_1(.)$ denote geometric distributions with support on $\{0,1,..\}$ and on $\{1,2,...\}$, respectively.}, $j\in \{0,1,...,n-1\}$, where we define $\tau_0:=0$. Then, $\tau_{j+1}-\tau_j$ has the same distribution as the number of steps that active particles take (if there are particles that have enough lifetime to do so) to go from $j+1$ to $j+2$ visited vertices in $\mathcal{K}_{n+1}$. Similarly, $\tau=\sum_{j=0}^{n-1} (\tau_{j+1}-\tau_j)$ is related to the potential coverage of $\mathcal{K}_{n+1}$ in the frog model.

Next, we show two useful lemmas regarding the coupon collector's problem. We only show the proof for the second lemma. The first one has a simple proof involving expectation and variance of $\tau$, which can be seen in \cite[Example 2.2.7]{ccp2}. 

\begin{lem} \label{lema:ccp_concentrado}
For any $\epsilon\in(0,1)$,
\[\lim_{n \to \infty}P[(1-\epsilon)n\log n\leq \tau\leq (1+\epsilon)n \log n]=1.
\]
\end{lem}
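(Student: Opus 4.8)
The plan is to prove the two-sided bound by a first-and-second-moment argument, exactly along the lines of the reference. First I would record that $\tau=\sum_{j=0}^{n-1}(\tau_{j+1}-\tau_j)$ is a sum of \emph{independent} random variables with $\tau_{j+1}-\tau_j\sim Geo_1(\tfrac{n-j}{n})$. From the mean of a $Geo_1(q)$ variable I get $E(\tau)=\sum_{j=0}^{n-1}\tfrac{n}{n-j}=n\sum_{k=1}^{n}\tfrac1k=nH_n$, where $H_n$ denotes the $n$-th harmonic number; since $H_n=\log n+\gamma+o(1)$, this gives $E(\tau)=n\log n\,(1+o(1))$, and in particular $E(\tau)\in[(1-\tfrac\epsilon2)n\log n,(1+\tfrac\epsilon2)n\log n]$ for all sufficiently large $n$.

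Next I would bound the variance. By independence of the increments, $\mathrm{Var}(\tau)=\sum_{j=0}^{n-1}\mathrm{Var}(\tau_{j+1}-\tau_j)$, and using $\mathrm{Var}(Geo_1(q))=\tfrac{1-q}{q^2}$ with $q=\tfrac{n-j}{n}$ yields $\mathrm{Var}(\tau)=\sum_{j=0}^{n-1}\tfrac{jn}{(n-j)^2}$. Substituting $k=n-j$ and bounding $n-k\le n$, this is at most $n^2\sum_{k=1}^{n}\tfrac1{k^2}\le\tfrac{\pi^2}{6}n^2$. Hence $\mathrm{Var}(\tau)=O(n^2)$, which is negligible compared with $E(\tau)^2\asymp n^2\log^2 n$.

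Finally I would apply Chebyshev's inequality: for fixed $\delta\in(0,1)$,
\[
P\bigl(|\tau-E(\tau)|\ge\delta\,n\log n\bigr)\le\frac{\mathrm{Var}(\tau)}{\delta^2 n^2\log^2 n}\le\frac{\pi^2}{6\,\delta^2\log^2 n},
\]
which tends to $0$ as $n\to\infty$. Taking $\delta=\epsilon/2$ and combining with the mean estimate above, the event $\{|\tau-E(\tau)|\le\tfrac\epsilon2 n\log n\}$ is contained in $\{(1-\epsilon)n\log n\le\tau\le(1+\epsilon)n\log n\}$ for large $n$, and its probability tends to $1$. There is no real obstacle here; the only mild point of care is keeping the harmonic-number asymptotics and the choice of constants consistent, so that both the shift of the mean away from $n\log n$ and the Chebyshev fluctuation comfortably fit inside the target window $[(1-\epsilon)n\log n,(1+\epsilon)n\log n]$.
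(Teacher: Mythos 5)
Your argument is correct and is precisely the ``expectation and variance'' proof that the paper alludes to but omits, deferring instead to the cited reference: the computation $E(\tau)=nH_n$, the $O(n^2)$ variance bound via independence of the geometric increments, and the Chebyshev step are all sound, and the bookkeeping with $\delta=\epsilon/2$ correctly places $\tau$ in the target window. Nothing further is needed.
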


\begin{lem}\label{lema:ccp_parcial}
For any $\epsilon\in(0,1)$,
\[\lim_{n \to \infty} P(\tau_{\lceil (1-\epsilon)n\rceil}\leq 2(\frac{1-\epsilon}{\epsilon})n)=1.\]
\end{lem}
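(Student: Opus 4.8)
The plan is to realize $\tau_{\lceil(1-\epsilon)n\rceil}$ as a sum of independent geometric variables and to control its upper tail with Chebyshev's inequality; a plain application of Markov's inequality does \emph{not} suffice here, because $E\bigl[\tau_{\lceil(1-\epsilon)n\rceil}\bigr]$ is asymptotically equivalent to $\tfrac{1-\epsilon}{\epsilon}n$, which is exactly half of the target threshold $2\tfrac{1-\epsilon}{\epsilon}n$, so Markov only yields a bound of order $1/2$.

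Write $m:=\lceil(1-\epsilon)n\rceil$ and recall $\tau_m=\sum_{j=0}^{m-1}(\tau_{j+1}-\tau_j)$, where the increments $\tau_{j+1}-\tau_j\sim Geo_1\bigl(\frac{n-j}{n}\bigr)$ are independent. The elementary but crucial observation is that for every $j\le m-1$ we have $j<(1-\epsilon)n$, hence $n-j>\epsilon n$. Using $E\,Geo_1(q)=1/q$, this gives $E[\tau_{j+1}-\tau_j]=\frac{n}{n-j}<\frac1\epsilon$, and therefore $E[\tau_m]<\frac m\epsilon\le\frac{(1-\epsilon)n+1}{\epsilon}$. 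Using $\mathrm{Var}\,Geo_1(q)=\frac{1-q}{q^2}$, the same inequality $n-j>\epsilon n$ gives $\mathrm{Var}(\tau_{j+1}-\tau_j)=\frac{jn}{(n-j)^2}<\frac1{\epsilon^2}$, so by independence $\mathrm{Var}(\tau_m)<\frac m{\epsilon^2}\le\frac{(1-\epsilon)n+1}{\epsilon^2}$.

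Finally I would set $t:=2\tfrac{1-\epsilon}{\epsilon}n$ and note that, for all $n$ large enough (say $n\ge\frac2{1-\epsilon}$), we have $t-E[\tau_m]\ge\frac{(1-\epsilon)n}{2\epsilon}>0$, so Chebyshev's inequality yields
\[P\bigl(\tau_m\ge t\bigr)\le P\bigl(|\tau_m-E[\tau_m]|\ge t-E[\tau_m]\bigr)\le\frac{\mathrm{Var}(\tau_m)}{(t-E[\tau_m])^2}\le\frac{4\bigl((1-\epsilon)n+1\bigr)}{(1-\epsilon)^2n^2},\]
which tends to $0$ as $n\to\infty$ for the fixed value of $\epsilon$; taking complements gives the claim. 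There is no real obstacle in this argument: the only points requiring a bit of care are the bookkeeping with the ceiling $\lceil(1-\epsilon)n\rceil$ and the recognition that one must pass to the second moment rather than stop at Markov's inequality (alternatively, one could stochastically dominate each increment by a $Geo_1(\epsilon)$ variable and invoke the weak law of large numbers for the resulting i.i.d.\ sum, but Chebyshev is quicker).
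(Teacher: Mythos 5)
Your proof is correct and follows essentially the same route as the paper: both decompose $\tau_{\lceil(1-\epsilon)n\rceil}$ into independent $Geo_1(\frac{n-j}{n})$ increments, exploit the bound $\frac{n-j}{n}\geq\epsilon$ for the relevant indices, and conclude by a second-moment argument (the paper passes through stochastic domination by an i.i.d.\ $Geo_1(\epsilon)$ sum and then the weak law of large numbers, which is Chebyshev in disguise, while you bound the variance of the inhomogeneous sum directly). One small inaccuracy in your motivational remark: $E[\tau_{\lceil(1-\epsilon)n\rceil}]\sim n\log(1/\epsilon)$ rather than $\frac{1-\epsilon}{\epsilon}n$, though your conclusion that Markov's inequality yields only a constant bound---and the upper bound $E[\tau_m]<m/\epsilon$ that your proof actually uses---are both correct.
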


\begin{proof}

It is easy to see, from the cumulative distribution function, that $Q_1 \succeq Q_2$\footnote{For any two random variables $X$ and $Y$ in the same probability space, we say that $X$ \textit{stochastically dominates} $Y$ when $P(X\geq a)\geq P(Y\geq a)$ for all $a\in\mathbb{R}$. We denote this relation by $X\succeq Y$.}  when $Q_1\sim Geo_1(q_1)$ and $Q_2\sim Geo_1(q_2)$ with $q_1\leq q_2$. Combining this with the fact that $ \frac{n-i}{n}\geq \frac{n-(1-\epsilon)n}{n}=\epsilon$ for $i\in \{0,1,...,\lceil (1-\epsilon)n\rceil-1\}$, we have that
\begin{equation}\label{eq:ccp1}
\tau_{\lceil (1-\epsilon)n\rceil}=\sum_{i=0}^{\lceil (1-\epsilon)n\rceil-1} (\tau_{i+1}-\tau_i)\preceq \sum_{i=1}^{\lceil (1-\epsilon)n\rceil}Z_i,\end{equation}
where $Z_i\stackrel{i.i.d.}{\sim} Geo_1(\epsilon)$.

Furthermore, as $E(\sum_{i=1}^{\lceil (1-\epsilon)n\rceil}Z_i)=\frac{{\lceil (1-\epsilon)n\rceil}}{\epsilon}$ and $\frac{(1-\epsilon)n}{\lceil (1-\epsilon)n\rceil}\stackrel{n\to \infty}{\to} 1$, we have for sufficiently large $n$ that
\begin{equation}\label{eq:ccp2}
\frac{2(1-\epsilon)n}{\epsilon \lceil (1-\epsilon)n\rceil}-\frac{E(\sum_{i=1}^{\lceil (1-\epsilon)n\rceil}Z_i)}{ \lceil (1-\epsilon)n\rceil}\geq \frac{1}{2\epsilon}.
\end{equation}

By ($\ref{eq:ccp1}$) and ($\ref{eq:ccp2}$),
\[\begin{aligned}P(\tau_{\lceil (1-\epsilon)n\rceil}\geq \frac{2(1-\epsilon)}{\epsilon}n)&\leq P(\sum_{i=1}^{\lceil (1-\epsilon)n\rceil}Z_i\geq \frac{2(1-\epsilon)}{\epsilon}n)\\
&\leq P(\frac{\sum_{i=1}^{\lceil (1-\epsilon)n\rceil}Z_i-E(\sum_{i=1}^{\lceil (1-\epsilon)n\rceil}Z_i)}{\lceil (1-\epsilon)n\rceil}\geq \frac{1}{2\epsilon}).\end{aligned}\]

Therefore, $\lim_{n\to\infty}P(\tau_{\lceil (1-\epsilon)n\rceil}\geq \frac{2(1-\epsilon)}{\epsilon}n)=0$ by the law of large numbers.
\end{proof}

\subsection{Auxiliary process}

Here we establish an auxiliary process analogous to the one in \cite{coverage}. The auxiliary process can be thought of as the frog model on a connected graph $\mathcal{G}=(\mathcal{V},\mathcal{E})$, but in such a way that only one particle is allowed to move per round, and additional particles are included so that the process continues infinitely.

As in the frog model, a vertex $o\in\mathcal{V}$ is called the root of $\mathcal{G}$. At time zero, every vertex $v\in \mathcal{V}\setminus\{o\}$ has $\eta_v$ inactive particles while $o$ has $1+\eta_o$ active particles, but $\{\eta_v\}_{v\in \mathcal{V}}$ are not necessarily i.i.d.. Let all $1+\sum_{i\in\mathcal{V}}\eta_i$ initial particles be denoted as originals. This is done because \textit{extra} particles will be added to the process at some point. Given the initial configuration, the auxiliary process on $\mathcal{G}$ with survival parameter $p$ has the following rules:

\begin{itemize}
    \item At each instant $k$ of time, one active particle is chosen (the second and third rules will guarantee that there is at least one active particle) to participate in round $k$. This particle can be chosen randomly or by any selection rule. The chosen particle checks whether it survives with probability $p$ and, if so, moves to a randomly and uniformly chosen connected vertex, activating any inactive particles at that vertex.
    \item Let $R=R(\mathcal{G},p)$ be the moment at which the only living original active particle dies, that is, the instant at which the original process ends. At this same moment, every remaining inactive particle is now called extra inactive. Furthermore, a new extra active particle is injected at the root $o\in\mathcal{V}$ to continue the process.
    \item  After $R$, each time the only living active particle dies, a new extra active particle is added at the root $o\in\mathcal{V}$.
\end{itemize}

Note that there are only extra particles participating from round $R+1$ onwards. Excluding cases in which $\sum_{i\in \mathcal{V}}\eta_i=\infty$ or $p = 1$, we have that $R<\infty$ with probability 1, since at most $1+\sum_{i \in\mathcal{V}}\eta_i$ original particles could be activated, each with a finite lifetime given by a geometric distribution with parameter $1-p$. Despite $R<\infty$, the auxiliary process as a whole is infinite, as it always renews itself by injecting new extra particles on the root.

By this construction, it is possible to guarantee that there is always an active particle (original or extra) to participate in the $k$-th round for any $k\in \mathbb{N}$, regardless of whether the original process has ended or not. For $k\in\mathbb{N}$, define $X_k=X_k(\mathcal{G},p)$ depending on what happens to the particle that participates in round $k$:

\begin{itemize}
    \item $X_k=0$ if it dies.
    \item $X_k=1$ if it survives and goes to a vertex which has been visited before round $k$; or if it survives and goes to a vertex $v$ such that $\eta_v=0$ and that has never been visited before round $k$.
    \item $X_k=j$, $j\in\{2,3,...\}$, if it survives and goes to a vertex $v$ such that $\eta_v=j-1$ and that has never been visited before round $k$.
\end{itemize}

Note that the previous random variable can be interpreted as the number of descendants (in the sense of a branching process) of the particle participating in round $k$. From the number of descendants in each round, we define the number $A'_k=A'_k(\mathcal{G},p)$ of potentially active particles at the end of the $k$-th round as
\begin{equation} \label{def:pot_ativas}
A'_0:=1+\eta_o; \hspace{5mm} A'_k:=1+\eta_o+\sum_{j=1}^k (X_j-1), \hspace{1mm} k\in \{1,2,...\}.\end{equation}

The term \textit{potential} comes from the fact that the previous expression may also be counting the descendants of extra particles. We can disregard this by denoting $R=R(\mathcal{G},p)$ as
\begin{equation}\label{def:r}
R=\inf\{k:A'_k=0\}
\end{equation}
and defining the number $A_k=A_k(\mathcal{G},p)$ of active particles at the end of the $k$-th round as
\begin{equation} \label{def:ativas}
A_k:=A'_k\mathbbm{1}_{(k<R)}, \hspace{2mm} k\in \{0,1,2,...\}.
\end{equation}

Let $E_k\subset \{X_k\geq 1\}$ be the event in which the particle participating in round $k$ survives and visits a vertex that has never been visited before. Similarly to what was done previously, we define the number $V'_k=V'_k(\mathcal{G},p)$ of potentially visited vertices until the end of the $k$-th round as
\begin{equation}\label{def:pot_visitados}
    V'_0:=1; \hspace{5mm} V'_k:=1+\sum_{j=1} ^{k} \mathbbm{1}_{E_j}, \hspace{1mm} k\in \{1,2,...\} 
\end{equation}
and the number $V_k=V_k(\mathcal{G},p)$ of visited vertices until the end of the $k$-th round as
\begin{equation} \label{def:visitados}
V_k:=V'_k \mathbbm{1}_{(k<R)}+V'_R \mathbbm{1}_{(k \geq R)},\hspace{1mm} k\in \{0,1,2,...\}.
\end{equation}

We also define the total number $V_\infty=V_\infty(\mathcal{G},p)$ of visited vertices during the entire process by
\begin{equation} \label{def:visitados_total}
V_\infty:=\lim_{k \to \infty} V_k=V_{R}.
\end{equation}

Note that $V_\infty=V_R$ is not affected by the extra particles injected into the process, as they only play a role after $R$. The extra particles are only useful for the mathematical convenience they bring, such as allowing the evaluation of the number of potentially active particles and potentially visited vertices without needing to worry about whether all the original particles have already died or not. It is also important to note that allowing only one particle to move per round maintains the same final number of visited vertices as the original dynamics of the frog model. Therefore, the total number of visited vertices in the corresponding frog model is also described by $V_\infty$, making it possible to study the frog model through the auxiliary process.

Specifically when we consider complete graphs and that $\{\eta_v\}_{v\in\mathcal{V}}$ is a collection of i.i.d. random variables with the same distribution of $\eta$, we have that
\begin{equation}\label{eq:dist_x}
P(X_j(\mathcal{K}_{n+1},p)=x|V'_{j-1}(\mathcal{K}_{n+1},p)=v)=\begin{cases}
1-p \textit{, if $x=0$}\\
\frac{p (v-1)}{n}+p(\frac{n-v+1}{n})P(\eta=0) \textit{, if $x=1$}\\
p(\frac{n-v+1}{n})P(\eta=1) \textit{, if $x=2$}\\
\vdots\\
p(\frac{n-v+1}{n})P(\eta=k-1) \textit{, if $x=k$}\\\\
\vdots\\
\end{cases}
\end{equation}

\subsection{Helpful lemmas}

Here we provide two lemmas for later use. The first one relates the number of active particles to the total number of steps they will take before dying. This lemma will be particularly useful in combination with the results of the coupon collector's problem, as it gives us the necessary number of active particles such that the total remaining steps are greater than what is needed to visit a high proportion of vertices or even the entire graph.

\begin{lem}\label{lema:vida_total}
Consider the auxiliary process on any graph and with survival parameter $p_n$. Consider that at some instant $k=k(n)$ there are $A_k$ active particles and define $T(k)$ as the combined number of remaining steps that these $A_k$ particles will still take (i.e., $T(k)$ is the sum of the remaining lifetimes of these $A_k$ particles), with $T(k):=0$ if $A_k=0$. Also consider that $p_n\neq 1$ for all $n\in\mathbb{N}$ and $\lim_{n\to \infty}p_n=1$ and define $q_n:=(1-p_n)^{-1}$. For any $\epsilon>0$ and $b>0$,
\begin{itemize}
    \item (i) $\lim_{n \to \infty} P(T(k)\leq (1-\epsilon/2)bn q_n|A_k\leq (1-\epsilon)bn)=1;$
    \item (ii) $\lim_{n \to \infty} P(T(k)\geq (1+\epsilon/2)bn  q_n|A_k\geq (1+\epsilon)bn)=1.$
\end{itemize}
\end{lem}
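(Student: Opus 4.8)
The plan is to recognize that, conditioned on the number of active particles $A_k$, the quantity $T(k)$ is a sum of $A_k$ i.i.d. geometric random variables, each representing the remaining lifetime of one active particle. By the memoryless property of the geometric distribution, the remaining lifetime of each particle still alive at time $k$ is again $Geo_0(1-p_n)$, independent of the past and of the other particles; so conditionally on $\{A_k = m\}$ we may write $T(k) = \sum_{i=1}^m W_i$ with $W_i \stackrel{i.i.d.}{\sim} Geo_0(1-p_n)$, hence $E(W_i) = p_n q_n = q_n - 1$. The two statements are then concentration estimates for such sums, and the conditioning on $\{A_k \le (1-\epsilon)bn\}$ (resp. $\{A_k \ge (1+\epsilon)bn\}$) rather than on a fixed value is handled by a stochastic-domination/monotonicity argument: on the event $\{A_k \le (1-\epsilon)bn\}$, $T(k)$ is stochastically dominated by $\sum_{i=1}^{\lceil(1-\epsilon)bn\rceil} W_i$, and on $\{A_k \ge (1+\epsilon)bn\}$ it stochastically dominates $\sum_{i=1}^{\lceil(1+\epsilon)bn\rceil} W_i$.

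For part (i), I would bound
\[
P\big(T(k) > (1-\epsilon/2)bn q_n \,\big|\, A_k \le (1-\epsilon)bn\big) \le P\Big(\textstyle\sum_{i=1}^{m_n} W_i > (1-\epsilon/2)bn q_n\Big),
\]
where $m_n := \lceil(1-\epsilon)bn\rceil$. Since $E\big(\sum_{i=1}^{m_n} W_i\big) = m_n(q_n-1) \sim (1-\epsilon)bn q_n$ (using $q_n \to \infty$, so $q_n - 1 \sim q_n$, and $m_n \sim (1-\epsilon)bn$), for large $n$ the target threshold $(1-\epsilon/2)bn q_n$ exceeds the mean by a factor bounded away from $1$; a Chebyshev (or Chernoff) bound then gives the limit $0$, provided the variance is controlled — and $\mathrm{Var}(W_i) = p_n q_n^2 \le q_n^2$, so $\mathrm{Var}\big(\sum_{i=1}^{m_n} W_i\big)/\big(bn q_n\big)^2 = O(1/n) \to 0$. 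Part (ii) is symmetric: on $\{A_k \ge (1+\epsilon)bn\}$, $T(k)$ dominates $\sum_{i=1}^{M_n} W_i$ with $M_n := \lceil(1+\epsilon)bn\rceil$, whose mean is $\sim (1+\epsilon)bn q_n > (1+\epsilon/2)bn q_n$ for large $n$, and the same variance bound forces $P\big(\sum_{i=1}^{M_n} W_i < (1+\epsilon/2)bn q_n\big) \to 0$.

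The main point requiring care — and the step I'd flag as the crux — is justifying that $T(k)$ really is (conditionally) a sum of i.i.d. fresh geometrics, i.e., that the event $\{A_k = m\}$ (which is measurable with respect to the history up through round $k$) is independent of the future survival coin-flips of the $m$ particles alive at that time. This is where the memoryless property of the geometric lifetime is essential: in the auxiliary process, whether a particle is still alive at the end of round $k$ depends only on its survival flips up to round $k$, and by independence of all survival flips and the memoryless property, the residual lifetimes are i.i.d. $Geo_0(1-p_n)$ and independent of $\mathcal{F}_k$. One subtlety: "remaining steps" must be interpreted as residual lifetime measured in the particle's own future moves — but since each active particle moves in exactly the rounds in which it is chosen and dies at the first failed flip thereafter, the residual lifetime (number of future successful flips before the first failure) is exactly $Geo_0(1-p_n)$ regardless of the (possibly adaptive) scheduling of rounds, so the scheduling rule does not interfere. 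With that established, the rest is the routine Chebyshev computation sketched above, entirely parallel to the proof of Lemma~\ref{lema:ccp_parcial}.
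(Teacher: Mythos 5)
Your proposal is correct and follows essentially the same route as the paper's proof: conditioning on the particle count, invoking memorylessness to write $T(k)$ as a sum of i.i.d.\ $Geo_0(1-p_n)$ residual lifetimes, bounding via stochastic domination on the conditioning events, and closing with a Chebyshev estimate using $E(T_i)=q_n-1$ and $Var(T_i)=q_n^2-q_n$. The one point you flag as the crux --- that the residual lifetimes are fresh geometrics independent of the history and of the scheduling rule --- is exactly the ``lack of memory'' observation the paper relies on, so there is no gap.
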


\begin{proof}
    Let $T:=T(k)$ be the combined total number of steps that all particles that are active at time $k$ will still take. Let $\{T_i\}_{i\geq 1}$ be a sequence of random variables such that $T_i \stackrel{i.i.d.}{\sim} Geo_0(1-p_n)$. Note that $T$ has the same distribution as $\sum_{i=1}^{A_k}T_i$ because, due to lack of memory, we can ignore how many times each particle has survived before time $k$. Also note that
\begin{equation}\label{eq:t_<>}
    \begin{cases}T \preceq \sum_{i=1}^{(1-\epsilon)b n} T_i \text{ under the conditional of item } (i),\\
    T \succeq \sum_{i=1}^{(1+ \epsilon)b n} T_i\text{ under the conditional of item }(ii).
    \end{cases}
\end{equation}

As $T_i \sim Geo_0(1-p_n)$, we have that $E(T_i)=q_n -1$ and $Var(T_i)=q_n^2-q_n$. Then,
\begin{equation}\label{eq:mean_lemma}
E(\sum_{i=1}^{(1\pm\epsilon)b n} T_i)=(1\pm\epsilon)bn q_n-(1\pm\epsilon)bn\end{equation} 
and
\begin{equation}\label{eq:var_lemma}
Var(\sum_{i=1}^{(1\pm\epsilon)b n} T_i) =(1\pm\epsilon)bn(q_n^2-q_n).\end{equation}

Since $q_n$ diverges as $p_n\to 1$ by assumption, then $(1+\epsilon)bn\leq (\epsilon/4)bn q_n$ for sufficiently large $n$, which implies by ($\ref{eq:mean_lemma}$) that
\begin{equation}\begin{aligned}\label{eq:aux_lemma_+}
(1+\epsilon/2)bn q_n-E(\sum_{i=1}^{(1+\epsilon)bn} T_i)&=-(\epsilon/2)bn q_n +(1+\epsilon)bn\\
&\leq -(\epsilon/4) bn q_n.
\end{aligned}\end{equation}

On the other hand, it is also true that
\begin{equation}
\begin{aligned}\label{eq:aux_lemma_-}
(1-\epsilon/2)bn q_n-E(\sum_{i=1}^{(1-\epsilon)bn} T_i)&=(\epsilon/2)bn q_n +(1-\epsilon)bn\\
&\geq (\epsilon/4) bn q_n.
\end{aligned}
\end{equation}

By (\ref{eq:t_<>}), (\ref{eq:var_lemma}), (\ref{eq:aux_lemma_+}) and Chebyshev's inequality, we have for sufficiently large $n$ that
\begin{equation}\label{eq:t_lemma}
\begin{aligned}
P(T\leq (1+\epsilon/2)bn q_n|A_k\geq (1+\epsilon)bn)
&\leq P(\sum_{i=1}^{(1+\epsilon)b n} T_i\leq  (1+\epsilon/2)bn q_n)\\
&\leq P(\sum_{i=1}^{(1+\epsilon)b n} T_i-E(\sum_{i=1}^{(1+\epsilon)b n} T_i)\leq - (\epsilon/4) bn q_n)\\
&\leq P(|\sum_{i=1}^{(1+\epsilon)b n} T_i-E(\sum_{i=1}^{(1+\epsilon)b n} T_i)|\geq (\epsilon/4) bn q_n)\\
&\leq \frac{Var(\sum_{i=1}^{(1+\epsilon)b n} T_i)}{[(\epsilon/4)bn q_n]^2}\\
&=\frac{(1+\epsilon)}{(\epsilon/4)^2 b}\frac{q_n^2-q_n}{nq_n^2}\stackrel{n\to \infty}{\rightarrow} 0.
\end{aligned}\end{equation}

So, the proof of part $(ii)$ is complete. Furthermore, $P(T\geq (1-\epsilon/2)bn q_n|A_k\leq (1-\epsilon)bn)\stackrel{n \to \infty}{\rightarrow} 0$ can be shown in a similar way to ($\ref{eq:t_lemma}$) by using (\ref{eq:aux_lemma_-}), thus also completing the proof of part $(i)$.
\end{proof}

The next lemma guarantees that the original auxiliary process (and so the frog model) lasts for at least some initial rounds whenever $p_n\rightarrow 1$. Later on, this will be important when we show that the particles collected in this early stage are already sufficient to visit a high proportion of vertices.

\begin{lem}\label{lema:r}
Consider the auxiliary process in which $\eta_v$ are i.i.d. variables with the same distribution as $\eta$.
    Suppose $\lim_{n\to \infty}p_n=1$ and $P(\eta=0)<1$. For any $c\in(0,1)$, we have that \[\lim_{n\to \infty}P(R(\mathcal{K}_{n+1},p_n)<k_1)=0,\]
    where $k_1=k_1 (n)=\lfloor cn \rfloor-1$.
\end{lem}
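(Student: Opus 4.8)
The plan is to bound the process $k\mapsto A'_k(\mathcal{K}_{n+1},p_n)$ from below by a lazy, positively biased nearest‑neighbour random walk on $\mathbb{Z}$ and then apply a gambler's‑ruin estimate. Set $S_k:=\sum_{j=1}^{k}(X_j-1)$, so that $S_0=0$, the increments satisfy $X_j-1\ge-1$, and $A'_k=(1+\eta_o)+S_k$. Because $A'_0=1+\eta_o\ge1$ and $A'$ can drop by at most $1$ per round, it can reach $0$ only by visiting it; hence $\{R(\mathcal{K}_{n+1},p_n)<k_1\}$ forces $S_k=-(1+\eta_o)\le-1$ for some $k<k_1$, so
\[
P\big(R(\mathcal{K}_{n+1},p_n)<k_1\big)\ \le\ P\Big(\min_{1\le k\le k_1-1}S_k\le-1\Big).
\]
(If $p_n=1$ then $R=\infty$ and the left‑hand side is $0$; so from now on assume $p_n<1$.)

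The key observation is that the number of visited vertices grows by at most one per round, i.e. $V'_{j-1}\le j$ for every $j$, and therefore $V'_{j-1}\le k_1<cn$ for all $j\le k_1$, which gives $\frac{(n+1)-V'_{j-1}}{n}>1-c$ throughout that window. Writing $\rho:=P(\eta\ge1)>0$ and using $(\ref{eq:dist_x})$ (whose right‑hand side depends on the history only through $V'_{j-1}$), we get, conditionally on the $\sigma$‑algebra $\mathcal{F}_{j-1}$ generated by the first $j-1$ rounds and for every $j\le k_1$,
\[
P\big(X_j-1=-1\mid\mathcal{F}_{j-1}\big)=1-p_n,\qquad P\big(X_j-1\ge1\mid\mathcal{F}_{j-1}\big)\ \ge\ p_n(1-c)\rho .
\]
Consequently $\mathcal{L}(X_j-1\mid\mathcal{F}_{j-1})$ stochastically dominates the law of $Y_j$, where $Y_j\in\{-1,0,1\}$ with $P(Y_j=-1)=1-p_n$, $P(Y_j=1)=p_n(1-c)\rho$ and the remaining mass on $0$; this is a genuine probability distribution since $(1-c)\rho<1$. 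A routine round‑by‑round application of Strassen's monotone coupling then produces, on one probability space, a copy of the auxiliary process together with an i.i.d.\ sequence $(Y_j)_{j\ge1}$ of that law such that $S_k\ge W_k:=\sum_{j=1}^{k}Y_j$ for all $k\le k_1$. Hence
\[
P\big(R(\mathcal{K}_{n+1},p_n)<k_1\big)\ \le\ P\Big(\min_{1\le k\le k_1-1}W_k\le-1\Big)\ \le\ P\big(W\text{ ever reaches }-1\big).
\]

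It remains to estimate the last probability, which no longer refers to the horizon $k_1$. The walk $W$ makes a $\pm1$ move at rate $(1-p_n)+p_n(1-c)\rho>0$ and is otherwise lazy, and since $p_n<1$ there are a.s.\ infinitely many non‑lazy rounds; thus $W$ reaches $-1$ precisely when its embedded $\pm1$ walk does. That embedded walk steps $+1$ with probability $\frac{p_n(1-c)\rho}{(1-p_n)+p_n(1-c)\rho}$, which exceeds $\tfrac12$ for $n$ large because $1-p_n\to0$ while $p_n(1-c)\rho\to(1-c)\rho>0$. By the classical gambler's‑ruin formula for a positively biased simple random walk, the probability it ever reaches $-1$ starting from $0$ equals $\frac{1-p_n}{p_n(1-c)\rho}$, which tends to $0$ as $n\to\infty$. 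Together with the displays above this gives $\lim_{n\to\infty}P(R(\mathcal{K}_{n+1},p_n)<k_1)=0$.

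I expect the main obstacle to be the conditional stochastic domination and the attendant coupling: one has to exploit the slow growth of $V'$ to keep the probability of landing on a fresh, non‑empty vertex bounded below by the fixed positive number $(1-c)\rho$ (up to the $p_n$ factor) for the whole window $j\le k_1$, and one must check that the comparison increment $Y_j$ genuinely defines a probability law for every $n$. Once these are in place, everything else is a short gambler's‑ruin computation combined with the fact that $k_1\to\infty$.
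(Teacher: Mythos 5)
Your proof is correct and takes essentially the same route as the paper: the paper dominates $X_j$ from below on the window $j\le k_1$ by i.i.d.\ variables $Z^{(n)}_j$ supported on $\{0,1,2\}$ whose law is exactly that of your $Y_j+1$ (obtained from the same bound $V'_{j-1}\le k_1<cn$), and then bounds $P(R<k_1)$ by the extinction probability of the corresponding branching process, explored one particle per round --- which is precisely your random walk $W$. Your gambler's-ruin formula $\frac{1-p_n}{p_n(1-c)P(\eta\ge 1)}$ is the explicit value of the smaller root of the paper's quadratic $s=E(s^{Z^{(n)}})$, so the two final steps compute the same quantity.
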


\begin{proof}

The proof is based on the relationship between the auxiliary process in its initial rounds and a branching process. Let $\{Z_j^{(n)}\}_{j \geq 1}$ be a sequence of i.i.d. random variables such that
\begin{equation}\label{eq:dist_zn}
P(Z^{(n)}_j=x)=\begin{cases}
1-p_n \textit{, if $x=0$};\\
p_nc+p_n(1-c)P(\eta=0) \textit{, if $x=1$};\\
p_n(1-c)P(\eta\geq 1) \textit{, if $x=2$}.
\end{cases}
\end{equation}

The distribution of $Z^{(n)}_j$ overestimates $V'_{j-1}(\mathcal{K}_{n+1},p_n)$ when $j\leq k_1$ (compare (\ref{eq:dist_zn}) with (\ref{eq:dist_x}) and (\ref{def:pot_visitados}) realizing that $\frac{p_n(V'_{j-1}(\mathcal{K}_{n+1},p_n)-1)}{n}\leq \frac{p_n(j-1)}{n}\leq \frac{p_n(k_1-1)}{n}\leq p_nc$) and brings together all values greater than 2. So, $\sum_{j=1}^{k}X_j(\mathcal{K}_{n+1},p_n)\succeq\sum_{ j=1}^{k}Z^{(n)}_j$ holds for all $k\in\{1,...,k_1\}$. Recalling the interpretation of $X_j$ as the number of descendants of the particle participating in round $j$, this also implies that
\begin{equation}\label{eq:branch}
    P(R(\mathcal{K}_{n+1},p_n)<k_1)\leq P(R_{Z^{(n)}}<k_1)\leq P(R_{Z^{(n)}}<\infty),
\end{equation} where $R_{Z^{(n)}}$ is the number of rounds until the extinction of a branching process with the number of descendants given by $Z^{(n)}$. For the first inequality in $(\ref{eq:branch})$ to be precise, we should consider that the rounds, as in the auxiliary process, are done by counting descendants of one particle at a time, unlike the usual rounds scheme for branching processes of counting descendants per generation. But as a whole, it does not matter as $P(R_{Z^{(n)}}<\infty)$ is the same for both round schemes. We will prove that $\lim_{n\to \infty}P(R_{Z^{(n)}}<\infty)=0$, thus completing the proof of the lemma as a consequence of $(\ref{eq:branch})$.

Fix any $\epsilon \in(0,1)$. We want to show that $P(R_{Z^{(n)}}<\infty)<\epsilon$ for sufficiently large $n$. By classical properties of a branching process and its extinction probability, we have to show that
\begin{equation}\label{eq:branching}
    s=E(s^{Z^{(n)}})
\end{equation}
has a solution in $(0,\epsilon)$ for sufficiently large $n$.

Equivalently, any root of
\[\begin{aligned}
f_n(s):=E(s^{Z^{(n)}})-s=(1-p_n)+s[p_nc+p_n(1-c)P(\eta=0)-1]+s^2p_n(1-c)P(\eta\geq 1)
\end{aligned}\]
is a solution of $(\ref{eq:branching})$.

We can consider just the cases where $p_n<1$ for sufficiently large $n$, as $p_n =1$ would make $P(Z^{(n)}=0)=0$ and trivially $P(R_{Z^{(n)}}<\infty)=0<\epsilon$. Then, $f_n(0)=1-p_n>0$ and $f_n(s)$ is continuous in $s$. So, to show that $f_n$ has a root in $(0,\epsilon)$, it is enough to show that $f_n(\epsilon)<0$.

As $P(\eta=0)<1$, then $P(\eta=0)+\epsilon P(\eta\geq 1)<1$. Let
\[a:=c+(1-c)[P(\eta=0)+\epsilon P(\eta\geq 1)] \in (0,1).\]

Since $\lim_{n\to \infty}p_n=1$, it is possible to make $(1-p_n)<\epsilon (1-a)/2$ by letting $n$ be large enough, and then
\[f_n(\epsilon)=(1-p_n)+\epsilon (ap_n-1)\leq (1-p_n)-\epsilon (1-a)<0.\]

\end{proof}

\section{Proofs}\label{sec:demo_kn}

Most of the time during this section, the auxiliary process and related variables consider the graph $\mathcal{K}_{n+1}$, survival parameter $p_n$ and that $\{\eta_v\}_{v\in\mathcal{V}_{n+1}}$ are i.i.d. with the same distribution as a variable $\eta$. So, these will be the default to be considered when notation is omitted to make the text cleaner.

We begin by proving the following lemma, which already implies Theorem $\ref{teo:alta_proporcao}$ and which will also help in the proof of the Theorem \ref{teo:transicao_fase}. For this lemma, we use the asymptotic notation $o(1)$ to denote a function $g$ such that $\lim_{n\to \infty}g(n)=0$.

\begin{lem}\label{lema:principal}
Consider that $P(\eta=0)<1$ and $\lim_{n\to \infty} p_n=1$. Fix any $c\in(0,1)$, $\epsilon \in(0,1)$ such that $c<1-\epsilon$. Let $k_1=k_1(n):=\lfloor cn\rfloor-1$ and $k_2=k_2(n):=\inf \{k:V_k(\mathcal{K}_{n+1},p_n)\geq \lceil (1-\epsilon)n\rceil +1\}$ with the definition that $\inf \emptyset=\infty$. Then
\[\lim_{n \to \infty} P(k_2(n)<\infty)=1\]
and for any sequence $(x_n)_{n\in\mathbb{N}}$ of real numbers,
\[P(A_{k_2}(\mathcal{K}_{n+1},p_n)\geq x_n)\geq P(\sum_{i=k_1(n)+1}^{\lceil (1-\epsilon)n\rceil}\eta_i\geq x_n)-o(1).\]
\end{lem}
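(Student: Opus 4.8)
The plan is to exhibit a single ``good'' event $G_n$ with $P(G_n)\to1$ on which both conclusions can be read off. The one probabilistic fact that makes everything work is this: in $\mathcal{K}_{n+1}$ the sequence $(v_0=o,v_1,\dots,v_n)$ of vertices listed in order of first visit (well defined because the auxiliary process never stops) is a uniformly random permutation of $\mathcal{V}_{n+1}\setminus\{o\}$ that is \emph{independent} of $\{\eta_w\}_{w\in\mathcal{V}_{n+1}}$. Indeed, in a complete graph the law of ``does round $j$ first visit a new vertex, and which one'' is governed only by the survival coins and the uniform destination choices — not by the $\eta_w$'s nor by which active particle is selected (nor by the extra-particle injections, which only reset the moving particle to the root) — and, conditionally, the new vertex is uniform over the not-yet-visited non-root vertices. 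A direct consequence, used twice below, is the exchangeability identity $\sum_{\ell=a}^{b}\eta_{v_\ell}\stackrel{d}{=}\sum_{i=a}^{b}\eta_i$ for every fixed $1\le a\le b\le n$.

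Next I would set up the bookkeeping valid on $\{k_2<R\}$. From \eqref{def:pot_ativas} and the case analysis defining $X_j$ one gets $X_j-1=\eta_{v_{V'_{j-1}}}$ on $E_j$ (round $j$ first visits $v_{V'_{j-1}}$) and $X_j-1\in\{-1,0\}$ otherwise, so, writing $D$ for the number of rounds in $\{k_1+1,\dots,k_2\}$ on which the moving particle dies and using that on $\{k_2<R\}$ one has $V'_{k_2}-1=\lceil(1-\epsilon)n\rceil$ (since $V'$ grows by at most $1$ per round and round $k_2$ is the one that first reaches this level),
\[
A'_{k_2}\;=\;A'_{k_1}-D+\sum_{\ell=V'_{k_1}}^{\lceil(1-\epsilon)n\rceil}\eta_{v_\ell}\;\ge\;A'_{k_1}-D+\sum_{\ell=k_1+1}^{\lceil(1-\epsilon)n\rceil}\eta_{v_\ell},
\]
the last step because always $V'_{k_1}\le k_1+1$. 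Hence, once $k_2<R$ (so that $A_{k_2}=A'_{k_2}$) and $A'_{k_1}\ge D$, we get $A_{k_2}\ge\sum_{\ell=k_1+1}^{\lceil(1-\epsilon)n\rceil}\eta_{v_\ell}$; the condition $A'_{k_1}\ge D$ already shows that the real work is to prove $A_{k_1}$ is of order $n$ (not merely positive) and that the particles present at round $k_1$ carry enough combined lifetime to drive the process past $(1-\epsilon)n$ visited vertices before it dies.

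I would then build $G_n$ as the intersection of the following events, each of probability $\to1$: (i) $R>k_1$, by Lemma~\ref{lema:r} applied with a constant slightly above $c$; (ii) the number of deaths among the first $\lceil3Cn\rceil$ rounds is $\le\delta n$, with $C:=2\tfrac{1-\epsilon}{\epsilon}$, by Markov's inequality since $1-p_n\to0$; (iii) $V'_{k_1}-1\ge c'n$ for a small fixed $c'\in(0,c)$, which follows from (ii) and Lemma~\ref{lema:ccp_parcial} (the number of coupon draws in the first $k_1$ rounds is $k_1-(\text{deaths})\ge k_1-\delta n$, while $\tau_{\lceil c'n\rceil}\le 2\tfrac{c'}{1-c'}n<k_1-\delta n$ for large $n$); (iv) $\sum_{\ell=1}^{\lceil c'n\rceil}\eta_{v_\ell}\ge b_0 n$ for a fixed $b_0>0$, by the exchangeability identity and the law of large numbers (truncating $\eta$ at a fixed level if $E(\eta)=\infty$, and using $P(\eta=0)<1$, so $E(\eta)>0$); (v) $\tau_{\lceil(1-\epsilon)n\rceil}\le Cn$, by Lemma~\ref{lema:ccp_parcial}; (vi) $T(k_1)\ge C'n$ for a fixed $C'>C$, by Lemma~\ref{lema:vida_total}(ii) conditioned on $\{A_{k_1}\ge bn\}$ — which by (i)--(iv) holds once $\delta<b:=b_0/2$ — together with $q_n\to\infty$, which lets $C'$ be as large as we wish. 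Choosing the constants in the order $\epsilon,c\mapsto C$, then $c'$ small, then $b_0,\delta$ small, all the displayed strict inequalities hold for large $n$, and a union bound on complements gives $P(G_n)\to1$.

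Finally I would check $G_n$ works. On $G_n$ the $A_{k_1}\ge bn$ active particles at round $k_1$ are original and carry combined remaining lifetime $T(k_1)\ge C'n>Cn\ge\tau_{\lceil(1-\epsilon)n\rceil}$; each of their moving steps occurs at a round $\le R$, so strictly more than $\tau_{\lceil(1-\epsilon)n\rceil}$ coupon draws are completed by round $R$, whence $V'$ reaches $\lceil(1-\epsilon)n\rceil+1$ at some round $<R$, i.e.\ $k_2<R<\infty$ — the first claim (together with $P(G_n)\to1$). Moreover a short self-consistency argument from (ii) and (v) bounds $k_2\le3Cn$, so $D\le\delta n\le bn=A'_{k_1}$, and the bookkeeping display gives $A_{k_2}\ge\sum_{\ell=k_1+1}^{\lceil(1-\epsilon)n\rceil}\eta_{v_\ell}$ on $G_n$. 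Hence
\[
P(A_{k_2}\ge x_n)\ \ge\ P\!\Big(G_n\cap\Big\{\textstyle\sum_{\ell=k_1+1}^{\lceil(1-\epsilon)n\rceil}\eta_{v_\ell}\ge x_n\Big\}\Big)\ \ge\ P\Big(\textstyle\sum_{\ell=k_1+1}^{\lceil(1-\epsilon)n\rceil}\eta_{v_\ell}\ge x_n\Big)-P(G_n^c),
\]
and by the exchangeability identity the first probability on the right equals $P(\sum_{i=k_1+1}^{\lceil(1-\epsilon)n\rceil}\eta_i\ge x_n)$, while $P(G_n^c)=o(1)$, which is the second claim. The step I expect to be the main obstacle is precisely getting $A_{k_1}=\Theta(n)$ with high probability — items (iii), (iv), (vi) and their interplay — since this is simultaneously what lets us absorb the death count $D$ in the identity for $A'_{k_2}$ and what feeds Lemma~\ref{lema:vida_total} so that the collected particles have enough ``fuel'' to push the coupon count up to $(1-\epsilon)n$ before $R$; it forces the ordered, mutually compatible choice of the constants $c',b_0,\delta$ relative to the given $c,\epsilon$, plus the small argument controlling the random horizon $k_2$ by $3Cn$.
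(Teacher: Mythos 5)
Your argument is correct, and it reaches both conclusions by a route that differs from the paper's in its two main technical steps while sharing the overall architecture (accumulate $\Theta(n)$ active particles by round $k_1\approx cn$, then let their combined lifetime, via Lemmas \ref{lema:vida_total} and \ref{lema:ccp_parcial}, drive the coupon count past $(1-\epsilon)n$). Where the paper lower-bounds $A'_{k_1}$ by a stochastic comparison of $\sum_j X_j$ with i.i.d.\ variables $Y^{(n)}_j$ built from (\ref{eq:dist_x}) — which forces a reduction to non-decreasing $p_n\neq 1$ and a fixed-index $Y^{(n_0)}$ trick so the law of large numbers applies to a fixed distribution — you instead make explicit the fact (implicit in (\ref{eq:dist_x})) that on $\mathcal{K}_{n+1}$ the discovery order is a uniform permutation independent of $\{\eta_w\}$, write $A'_{k_1}$ exactly as $1+\eta_o+\sum_{\ell<V'_{k_1}}\eta_{v_\ell}-(\text{deaths})$, and bound the three pieces by exchangeability plus the LLN, Lemma \ref{lema:ccp_parcial}, and Markov's inequality; this avoids both the monotonicity reduction and the triangular-array issue. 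For the second conclusion, the paper modifies the selection rule so that particles activated in $(k_1,k_2]$ never move before $k_2$ and works on the event $E_n$ that none of them has died, whereas your exact identity $A'_{k_2}=A'_{k_1}-D+\sum_{\ell=V'_{k_1}}^{\lceil(1-\epsilon)n\rceil}\eta_{v_\ell}$ absorbs the death count $D$ into $A'_{k_1}\geq bn$ and needs no change of selection rule — arguably cleaner and more robust. The price of your route is that the independence of the discovery permutation from the $\eta$-field deserves a short stand-alone proof (the coupling with an independent uniform permutation, valid because the destination of a surviving move is uniform over all non-current vertices regardless of the mover's position, so the next new vertex is always uniform over the unvisited set), and that the self-consistency bound $k_2\leq 3Cn$ and the ordered choice of $c',b_0,\delta$ must be written out; the trivial case $p_n=1$ (excluded in Lemma \ref{lema:vida_total}) should also be dispatched separately, as the paper does via its reduction. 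None of these is a gap, only detail to be filled in.
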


\begin{obs}
    Lemma $\ref{lema:principal}$ implies Theorem $\ref{teo:alta_proporcao}$ as argued below. Since the only restriction to $(p_n)_{n\in \mathbb{N}}$ in Theorem $\ref{teo:alta_proporcao}$ is that $\lim_{n\to \infty}p_n=1$, we can prove in an equivalent way and without loss of generality that $V_{\infty}(\mathcal{K}_{n+1},p_n)\geq (1-\epsilon)(n+1)$ w.h.p. for any fixed $\epsilon\in(0,1)$. With a fixed $\epsilon\in(0,1)$, we can take $0<\epsilon'<\epsilon$ so that $(1-\epsilon')n\geq (1-\epsilon)(n+1)$ for sufficiently large $n$. By Lemma $\ref{lema:principal}$, we have that w.h.p. $k_2=\inf \{k:V_k(\mathcal{K}_{n+1},p_n)\geq \lceil (1-\epsilon')n\rceil +1\}<\infty$, and therefore w.h.p. $V_\infty(\mathcal{K}_{n+1},p_{n})\geq V_{k_2}(\mathcal{K}_{n+1} ,p_n)\geq (1-\epsilon')n\geq (1-\epsilon)(n+1)$. 
\end{obs}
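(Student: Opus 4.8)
The plan is to follow exactly the reduction sketched in the statement, supplying the justifications that are left implicit. The first move is a reindexing. Theorem \ref{teo:alta_proporcao} imposes no condition on $(p_n)_{n\in\mathbb{N}}$ beyond $\lim_{n\to\infty}p_n=1$, a property that is preserved under the shift $n\mapsto n+1$ (given a sequence $(p_m)$ with $p_m\to 1$, the sequence $q_n:=p_{n+1}$ still tends to $1$). Hence proving that $V_\infty(\mathcal{K}_{n+1},p_n)\geq(1-\epsilon)(n+1)$ w.h.p.\ for every fixed $\epsilon\in(0,1)$ and every admissible sequence is equivalent to the theorem. The advantage of this reformulation is that it matches the indexing of the auxiliary process from Section \ref{sec:aux}, through which $V_\infty(\mathcal{K}_{n+1},p_n)$ equals the coverage of the frog model on $\mathcal{K}_{n+1}$.

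Next I would introduce a slack parameter. Fix $\epsilon\in(0,1)$ and pick any $\epsilon'\in(0,\epsilon)$. Since $(1-\epsilon')n-(1-\epsilon)(n+1)=(\epsilon-\epsilon')n-(1-\epsilon)\to+\infty$, we have $(1-\epsilon')n\geq(1-\epsilon)(n+1)$ for all large $n$, so it suffices to show $V_\infty(\mathcal{K}_{n+1},p_n)\geq(1-\epsilon')n$ w.h.p. Because $\epsilon'<1$, I can choose some $c\in(0,1)$ with $c<1-\epsilon'$ (for instance $c=(1-\epsilon')/2$), so that the hypotheses of Lemma \ref{lema:principal} are met with $\epsilon'$ in the role of $\epsilon$.

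The core step invokes only the first conclusion of Lemma \ref{lema:principal}: with $k_2=\inf\{k:V_k(\mathcal{K}_{n+1},p_n)\geq\lceil(1-\epsilon')n\rceil+1\}$, one has $\lim_{n\to\infty}P(k_2<\infty)=1$. On the event $\{k_2<\infty\}$ the definition of $k_2$ gives $V_{k_2}(\mathcal{K}_{n+1},p_n)\geq\lceil(1-\epsilon')n\rceil+1$, and since $k\mapsto V_k$ is nondecreasing with $V_\infty=\lim_k V_k$ by (\ref{def:visitados_total}), it follows that $V_\infty(\mathcal{K}_{n+1},p_n)\geq V_{k_2}(\mathcal{K}_{n+1},p_n)\geq(1-\epsilon')n\geq(1-\epsilon)(n+1)$ for large $n$. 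Intersecting with $\{k_2<\infty\}$, whose probability tends to $1$, yields the desired w.h.p.\ lower bound and hence the theorem.

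I expect no genuine obstacle here: the quantitative second conclusion of Lemma \ref{lema:principal} concerning $A_{k_2}$ is not needed at all, only the assertion that $k_2<\infty$ w.h.p. The sole points demanding care are the harmless index shift $n\mapsto n+1$ and the $\epsilon'$-interpolation that absorbs the gap between $(1-\epsilon')n$ and $(1-\epsilon)(n+1)$; both are elementary. The one structural fact being relied upon is the identification, established when the auxiliary process was defined, of its total coverage $V_\infty$ with that of the frog model, which licenses transferring the conclusion back to $V_\infty(\mathcal{K}_n,p_n)$.
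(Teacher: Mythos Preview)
Your proposal is correct and follows essentially the same approach as the paper's argument: reindex to $\mathcal{K}_{n+1}$, interpolate with $\epsilon'<\epsilon$, and invoke only the first conclusion of Lemma~\ref{lema:principal} that $k_2<\infty$ w.h.p. You simply make explicit several points the paper leaves implicit (the reindexing justification, the computation $(1-\epsilon')n-(1-\epsilon)(n+1)\to+\infty$, the choice of $c$, and the observation that the bound on $A_{k_2}$ is unused here).
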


\begin{proof}

In order to avoid some technical problems in the proof of Lemma $\ref{lema:principal}$, we consider only cases in which $p_n$ is non-decreasing and $p_n\neq 1$ for all $n\in\mathbb{N}$ in addition to the original assumption of $\lim_{n\to\infty} p_n=1$. These additional assumptions do not reduce the generality of $(p_n)_{n\in\mathbb{N}}$. In fact, if $(p_n)_{n\in\mathbb{N}}$ is any sequence such that $\lim_{n\to\infty} p_n=1$, we can construct another sequence $(p'_n )_{n\in\mathbb{N}}$ such that $p'_n:=\min\{1-\frac{1}{n},\inf_{k\geq n}\{p_k\}\}$. Since $\lim_{n\to\infty}p'_n=1$, $p'_n$ in non-decreasing and $p'_n\neq 1$ for all $n\in\mathbb{N}$, the lemma would be valid for $p'_n$ and consequently also for $p_n$, because $p_n\geq p'_n$ implies $V_k(\mathcal{K}_{n+1},p_n)\succeq V_k(\mathcal{K}_{n+1},p'_n)$ and $ A_k(\mathcal{K}_{n+1},p_n)\succeq A_k(\mathcal{K}_{n+1},p'_n)$ for all $k\in\mathbb{N}$.

Initially, suppose that $E(\eta)<\infty$. Let $\{Y_j^{(n)}\}_{j \in \mathbb{N}}$ be a sequence of i.i.d. random variables such that
\begin{equation}\label{eq:dist_yn}
P(Y^{(n)}_j=x)=\begin{cases}
1-p_n \textit{, if $x=0$}\\
p_nc+p_n(1-c)P(\eta=0) \textit{, if $x=1$}\\
p_n(1-c)P(\eta=1) \textit{, if $x=2$}\\
\vdots\\
p_n(1-c)P(\eta=k-1) \textit{, if $x=k$}\\\\
\vdots\\
\end{cases}
\end{equation}

Note that $E(Y^{(n)}_j)=p_nc+p_n(1-c)[E(\eta)+1]\stackrel{n \to \infty}{\rightarrow} c+(1-c )[E(\eta)+1]$ as $\lim_{n\to \infty} p_n=1$. Since $P(\eta=0)<1$ and therefore $E(\eta)>0$, we have that $c+(1-c)[E(\eta)+1]=1+(1-c)E(\eta)>1$. So, if we let $d:=(1-c)E(\eta)/2>0$, then
\begin{equation}\label{ineq:ey}
E(Y_j^{(n)})\geq 1+d\text{ for sufficiently large } n.\end{equation}

Let $b>0$ be a constant small enough so that $\frac{(1+\epsilon)b}{c}\leq d/4$ and therefore $\frac{(1+\epsilon)bn}{k_1}\leq \frac{(1+\epsilon)bn}{cn-2}\leq d/2$ for sufficiently large $n$. Using this fact and (\ref{ineq:ey}), we conclude that there is a $n_0$ such that the following inequality, which will be used later, holds:
\begin{equation}\label{ineq:n0}
    \frac{(1+\epsilon)bn}{k_1}+1-E(Y_j^{({n_0})})\leq -d/2<0,\text{ for }n\geq n_0.
\end{equation}

The distribution of $Y^{(n)}_j$ overestimates $V'_{j-1}(\mathcal{K}_{n+1},p_n)$ when $j\leq k_1=\lfloor cn \rfloor-1$ (compare (\ref{eq:dist_yn}) with (\ref{eq:dist_x}) and (\ref{def:pot_visitados}) realizing that $\frac{p_n(V'_{j-1}(\mathcal{K}_{n+1},p_n)-1)}{n}\leq \frac{p_n(j-1)}{n}\leq \frac{p_n(k_1-1)}{n}\leq p_nc$), so the comparison $\sum_{j=1}^{k_1}X_j(\mathcal{K}_{n+1},p_n) \succeq \sum_{ j=1}^{k_1}Y^{(n)}_j$ holds. Recall that we are considering that $p_n$ is non-decreasing, so
\begin{equation}\label{ineq:estoc_n0}
\sum_{j=1}^{k_1}X_j(\mathcal{K}_{n+1},p_n) \succeq\sum_{j=1}^{k_1}Y^{(n)}_j \succeq \sum_{ j=1}^{k_1}Y^{({n_0})}_j\text{ for }n\geq n_0.\end{equation}

When $n\geq n_0$, we can use the definition in $(\ref{def:pot_ativas})$ as well as (\ref{ineq:n0}) and (\ref{ineq:estoc_n0}) to conclude that
\[\begin{aligned}
P(A'_{k_1}(\mathcal{K}_{n+1},p_n)\geq (1+\epsilon)bn)&= P(\sum_{j=1}^{k_1} X_j(\mathcal{K}_{n+1},p_n) \geq (1+\epsilon)bn+k_1-1-\eta_0)\\
&\geq P(\sum_{j=1}^{k_1} Y^{(n_0)}_j \geq (1+\epsilon)bn+k_1)\\
&\geq P(\frac{\sum_{j=1}^{k_1} Y^{(n_0)}_j-E(\sum_{j=1}^{k_1} Y^{(n_0)}_j)}{k_1} \geq -d/2),
\end{aligned}\]
which implies that $A'_{k_1}\geq (1+\epsilon)bn$ w.h.p. by the law of large numbers as $k_1(n)=\lfloor cn\rfloor-1\to +\infty$. 

By Lemma \ref{lema:r}, we can guarantee that $A_{k_1}=A'_{k_1}$ w.h.p. (compare with $\ref{def:ativas}$), meaning that the original process does not die before round $k_1$ and there are indeed at least $(1 +\epsilon)bn$ active particles at the end of this round, not just potentially active particles.

Let us now recall we concluded that $A_{k_1}\geq (1+\epsilon)bn$ w.h.p. only for the case in which $E(\eta)<\infty$, in addition to the original assumption of Lemma $\ref{lema:principal}$ that $P(\eta=0)<1$. The case $E(\eta)=\infty$ can be obtained by comparison, using $\eta^{*}:=\mathbbm{1}(\eta\geq 1)$. We temporarily use the notations of $A_{k_1}$ and $A_{k_1}^*$ to differentiate the models using $\eta$ and $\eta^*$, respectively. As $P(\eta^*=0)<1$ and $E(\eta^{*})<\infty$, then w.h.p. $A_{k_1}^*\geq (1+\epsilon)bn$. Combining this with the fact that $A_{k_1}\succeq A_{k_1}^*$, we conclude that $A_{k_1}\geq (1+\epsilon)bn$ w.h.p. also in the case $E(\eta )=\infty$.

Let us also recall that the auxiliary process on $\mathcal{K}_n$ and all equations shown related to it do not depend on the rule for choosing which active particle is selected to participate in each round. So, now we make a non-mandatory modification of such rule whose sole purpose is to make the proof possibly easier to understand. For any $k\geq k_1$, it always happens that one of the $A_{k_1}$ active particles alive at time $k_1$ is chosen to participate in round $k$, unless there is no longer any alive active particle at time $k$ that was also active at time $k_1$. In other words, particles activated after $k_1$ have to wait until all particles that were active at time $k_1$ die before having the chance to participate in a round.

Let $T(k_1)$ be the total combined number of remaining steps that all $A_{k_1}$ active particles alive at time $k_1$ have yet to take. Note that w.h.p. $T(k_{1})\geq (1+\epsilon/2)bn (1-p_n)^{-1}$ by Lemma \ref{lema:vida_total} part $(ii)$. Also, $\tau_{\lceil(1-\epsilon)n\rceil}\leq n[2(\frac{1-\epsilon } {\epsilon})]$ holds w.h.p. in the coupon collector's problem by Lemma $\ref{lema:ccp_parcial}$. As $(1-p_n)^{-1}$ diverges,  we have that w.h.p.\[T(k_1)\geq n (1-p_n)^{-1}(1+\epsilon/2)b\geq n[2(\frac{1-\epsilon }{\epsilon})]\geq \tau_{\lceil(1-\epsilon)n\rceil}.\]
Therefore, by the comparison between the coupon collector's problem with $n$ coupons and the auxiliary process on $\mathcal{K}_{n+1}$, the active particles alive at time $k_1$ w.h.p. jointly visit at least $\lceil (1-\epsilon)n\rceil +1$ vertices. This completes the first part of the proof, as w.h.p. $k_2=\inf \{k:V_k=\lceil (1-\epsilon)n\rceil +1\}<\infty$ (or in other words, $\{k:V_k=\lceil (1-\epsilon)n\rceil +1\}\neq \emptyset$).

Note that by our imposed rule for selecting which particle participates in each round, w.h.p. only particles that were activated up to instant $k_1$ participate in rounds $k\in(k_1,k_2]\cap \mathbb{N}$. Therefore, the event
\[E_n:=\{\text{every particle activated in rounds $k\in(k_1,k_2]\cap \mathbb{N}$ is alive at the end of the round $k_2$}\}
\] 
holds w.h.p., since it is not possible for a particle to die without having participated in some round.

Fix any sequence of real numbers $(x_n)_{n\in\mathbb{N}}$. Note that the number of particles activated in rounds $k\in(k_1,k_2]\cap \mathbb{N}$ has the same distribution as $\sum_{i=V_{k_1}+1}^{\lceil (1-\epsilon)n\rceil+1}\eta_i$ and $\sum_{i=V_{k_1}}^{\lceil (1-\epsilon)n\rceil}\eta_i$. So, $A_{k_2}$ conditioned on $E_n$ is stochastically greater than $\sum_{i=V_{k_1}}^{\lceil (1-\epsilon)n\rceil}\eta_i$. Furthermore, by the definitions in (\ref{def:pot_visitados}) and ($\ref{def:visitados}$), we have that $V_{k_1}\leq k_1+1$. Therefore, for all $n$,
\[
P(A_{k_2}\geq x_n|E_n)\geq P(\sum_{i=V_{k_1}}^{\lceil (1-\epsilon)n\rceil}\eta_i\geq x_n)\geq P(\sum_{i=k_1+1}^{\lceil (1-\epsilon)n\rceil}\eta_i\geq x_n).\]

So, we conclude that
\[\begin{aligned}
P(A_{k_2}\geq x_n)&\geq P(A_{k_2}\geq x_n|E_n)P(E_n)\\
&\geq P(\sum_{i=k_1+1}^{\lceil (1-\epsilon)n\rceil}\eta_i\geq x_n)[1-P(E^c_n)]\\
&= P(\sum_{i=k_1+1}^{\lceil (1-\epsilon)n\rceil}\eta_i\geq x_n)-P(\sum_{i=k_1+1}^{\lceil (1-\epsilon)n\rceil}\eta_i\geq x_n)P(E_n^c),
\end{aligned}\]
completing the proof of Lemma \ref{lema:principal}, as $P(\sum_{i=k_1+1}^{\lceil (1-\epsilon)n\rceil}\eta_i\geq x_n)P(E_n^c)\leq P(E_n^c)=o(1)$.

\end{proof}

\begin{proof}[Proof of Theorem \ref{teo:transicao_fase} part $(i)$]

The proof strategy used here is similar to that used in \cite[Proposition 1.1]{frogs_on_trees}. 

To be able to use the lemmas directly, we first show that $\lim_{n \to \infty}P( V_\infty(\mathcal{K}_{n+1},p_n)=n+1)=0$ and then we show that the result also holds when exchanging $p_n$ for $p_{n+1}$. So, we use the graph $\mathcal{K}_{n+1}$ and we let $\mathcal{V}=\mathcal{V}_{n+1}$ be its corresponding set of vertices, with $|\mathcal{V}|=n+1$.

The idea is to place all $1+\sum_{v\in \mathcal{V}}\eta_v$ original particles at the root $o\in\mathcal{V}$, so that every particle starts out active. In other words, we deal with another auxiliary process, but considering $1+\eta^*_o=1+\sum_{v\in \mathcal{V}}\eta_v$ and $\eta^*_v=0$ for all $ v\in\mathcal{V}\setminus\{o\}$. From now on, we use the asterisk to differentiate the auxiliary process that uses $\{\eta^*_v\}_{v\in\mathcal{V}}$ from the one that uses $\{\eta_v\}_{v \in\mathcal{V}}$.

We have that $P(V^*_\infty(\mathcal{K}_{n+1},p_n)=n+1)\geq P(V_\infty(\mathcal{K}_{n+1},p_n) =n+1)$ by the structure of the complete graph, as argued below. Note that, conditioned on $V'_{j-1}=V'^*_{j-1}=v$, in round $j$ we have that the probability of visiting a new vertex is $p\frac{ v-1}{n}$ in both processes; the probability of visiting a previously visited vertex is $p\frac{n-v+1}{n}$ in both processes; the probability of the particle dying is $1-p$ in both processes. Therefore, we can couple these two models so that $V'_k=V'^*_k$ for all $k\in\mathbb{N}$. Additionally, note that $A'^*_k=1+\sum_{v\in\mathcal{V}}\eta_v-\sum_{i=1}^k \mathbbm{1}_{(X^*_i =0)}$ since all particles start out active and a change in the number of active particles occurs only in the case of death (compare with (\ref{def:pot_ativas})); therefore $A'^*_k\geq A'_k$ for all $k\in\mathbb{N}$. Then, $R^*=\inf\{k:A'^*_k=0\}\geq \inf\{k:A'_k =0\}= R$, that is, agglutinating the particles at the root at the initial instant causes the model to survive for a greater or equal number of rounds. Bringing all this information together, we conclude that $V^*_\infty=V'^*_{R^*}\geq V'^*_{R}= V'_R= V_\infty$.

Our goal is to show that, even when we accumulate all the initial particles at the root, it still holds that $\lim_{n\to \infty}P(V^*_\infty(\mathcal{K}_{n+1},p_n)=n+1)=0$.

Let $A^*_0=1+\sum_{v\in\mathcal{V}} \eta_v$ be initial number of active particles at the root of $\mathcal{K}_{n+1}$. Fix $\epsilon>0$ such that $(1-\epsilon)\alpha-E(\eta)>\epsilon$. Since $|\mathcal{V}|=n+1$, we have that
\[\begin{aligned}
P(A^*_0\leq (1-\epsilon)\alpha n)&=P(\frac{1+\sum_{v\in \mathcal{V}} \eta_v}{|\mathcal{V}|}\leq \frac{(1-\epsilon)\alpha n}{n+1})\\
&=P(\frac{\sum_{v\in \mathcal{V}}\eta_v}{|\mathcal{V}|}-E(\eta)\leq \frac{(1-\epsilon)\alpha n}{n+1}-E(\eta)-\frac{1}{n+1})\\
\end{aligned}\]
has $1$ as a limit when $n\to \infty$ by the law of large numbers, as $\frac{(1-\epsilon)\alpha n}{n+1}-E(\eta)- \frac{1}{n+1}>\epsilon/2$ holds for sufficiently large $n$.

Thus, by Lemma \ref{lema:vida_total} part $(i)$, we know that all $A^*_0$ particles together make w.h.p. at most $(1-\epsilon/2)n\log n$ steps. Now, we use the equivalence with the coupon collector's problem and Lemma \ref{lema:ccp_concentrado} to conclude that this number of steps w.h.p. is not enough to visit all the vertices of $\mathcal{K}_{n+1}$.

We proved that $\lim_{n \to \infty}P( V_\infty(\mathcal{K}_{n+1},p_n)=n+1)=0$ for all $\alpha>E(\eta)$. Below we show how the same would apply when using $p_{n+1}$, finishing the proof of Theorem \ref{teo:transicao_fase} part $(i)$.

Write $p_{n}(\alpha)=1-\frac{\alpha}{\log(n)}$. For any fixed $\alpha>E(\eta)$, select some $\alpha^*\in ( E(\eta),\alpha)$. As $\lim_{n\to \infty}\frac{\log (n+1)}{\log (n)}=1$, for a sufficiently large $n$, we have that
\[\frac{\alpha}{\alpha^*}\geq \frac{\log(n+1)}{\log(n)},
\]
which implies that $p_{n+1}(\alpha)=1-\frac{\alpha}{\log(n+1)} \leq 1-\frac{\alpha^*}{\log(n)} =p_{n}(\alpha^*)$ and, due to the monotonicity of the model in $p$, also that $P(V_\infty(\mathcal{K}_{n+1},p_{n+1}(\alpha))= n+1)\leq P(V_\infty(\mathcal{K}_{n+1},p_{n}(\alpha^*))=n+1)$.

As we have already shown that part $(i)$ of Theorem \ref{teo:transicao_fase} holds for $p_{n}(\alpha^*)$, it also holds for $p_{n+1}(\alpha)$.

\end{proof}

\begin{proof}[Proof of Theorem \ref{teo:transicao_fase} part $(ii)$]

We only focus on the case where $E(\eta)<\infty$. It is easy to see that proving Theorem \ref{teo:transicao_fase} part $(ii)$ in the case $E(\eta)<\infty$ already implies proving it also for the case $E(\eta)=\infty$. To check this, we start with $\eta$ such that $E(\eta)=\infty$ and compare it with $\eta^{(m)}:=\eta\mathbbm{1}_{\{\eta\leq m\}} +m\mathbbm{1}_{\{\eta> m\}}$. Since $\lim_{m\to \infty}E(\eta^{(m)})=E(\eta)=\infty$ by the Monotone Convergence Theorem, for any $\alpha>0$, we can take a sufficiently large $m$ such that $\alpha<E(\eta^{(m)})$. The comparison $\eta\succeq \eta^{(m)}$ and this same Theorem supposedly valid for $\eta^{(m)}$ (since $\alpha<E(\eta^{(m)})<\infty$) imply that $P_\eta(V_\infty(\mathcal{K}_n,p_n)=n)\geq P_{\eta^{(m)}}(V_\infty (\mathcal{K}_n,p_n)=n)\stackrel{n\to \infty}{\to}1$.

Furthermore, to be able to use the lemmas directly, we focus only on the proof that $\lim_{n\to \infty} P (V_\infty(\mathcal{K}_{n+1}, p_n) = n +1) = 1$. The monotonicity of the frog model in $p$ implies that this is enough to demonstrate the same for $p_{n+1}\geq p_n$.

As $\alpha<E(\eta)$, we can fix $\epsilon\in(0,1)$ and $c\in(0,1)$ such that $c<1-\epsilon$ and $\frac{(1+\epsilon)\alpha}{1-\epsilon-c}<E(\eta)$. 

Since $(1-\epsilon-c)n\leq \lceil (1-\epsilon)n\rceil-\lfloor cn \rfloor+1\leq (1-\epsilon-c)n+3$, we have that
\begin{equation}\label{ineq:comp_ep_alp}
\frac{(1+\epsilon)\alpha n-E(\sum_{i=\lfloor cn\rfloor }^{\lceil (1-\epsilon)n\rceil}\eta_i)}{\lceil (1-\epsilon)n\rceil-\lfloor cn \rfloor+1}\leq \frac{(1+\epsilon)\alpha}{1-\epsilon-c}-\frac{(1-\epsilon-c)n E(\eta)}{(1-\epsilon-c)n+3}.
\end{equation}

By Lemma \ref{lema:principal} and (\ref{ineq:comp_ep_alp}), if we let $k_2:=\inf \{k:V_k(\mathcal{K}_{n+1}, p_n)=\lceil (1-\epsilon)n \rceil+1\}$, then
\[\begin{aligned}
P(A_{k_2}\geq (1+\epsilon)\alpha n)&\geq P(\sum_{i=\lfloor cn\rfloor}^{\lceil(1-\epsilon)n\rceil}\eta_i\geq (1+\epsilon)\alpha n)-o(1)\\
&\geq P(\frac{\sum_{i=\lfloor cn\rfloor}^{\lceil(1-\epsilon)n\rceil}\eta_i-E(\sum_{i=\lfloor cn\rfloor}^{\lceil(1-\epsilon)n\rceil}\eta_i)}{\lceil (1-\epsilon)n\rceil-\lfloor cn \rfloor+1}\geq \frac{(1+\epsilon)\alpha}{1-\epsilon-c}-\frac{(1-\epsilon-c)n E(\eta)}{(1-\epsilon-c)n+3})-o(1).
\end{aligned}\]

For the initially chosen constants $\epsilon$ and $c$, we have that $z:=\frac{(1+\epsilon)\alpha}{1-\epsilon-c} -E(\eta)<0$. It is also true that $\frac{(1+\epsilon)\alpha}{1-\epsilon-c}-\frac{(1-\epsilon-c)n E(\eta)}{(1-\epsilon- c)n+3}\stackrel{n\to \infty}{\rightarrow} z$ and therefore $\frac{(1+\epsilon)\alpha}{1-\epsilon-c}-\frac{(1 -\epsilon-c)n E(\eta)}{(1-\epsilon-c)n+3}\leq z/2<0$ for sufficiently large $n$. Then, we can apply the law of large numbers to conclude that w.h.p. $A_{k_2}(\mathcal{K}_{n+1},p_n)\geq (1+\epsilon)\alpha n$.

By Lemma \ref{lema:vida_total} part $(ii)$, we know that w.h.p. these $A_{k_2}$ particles jointly take at least $(1+\epsilon/2)n\log n$ steps. By Lemma \ref{lema:ccp_concentrado} and the comparison with the coupon collector's problem, this number of steps is w.h.p. enough to visit all vertices of $\mathcal{K}_{n+1}$.
\end{proof}

\bibliographystyle{alpha}
\bibliography{Cov-Jul2024}

\newcommand{\etalchar}[1]{$^{#1}$}
\begin{thebibliography}{ALMM06}

\bibitem[ALMM06]{grafo_completo_fabio}
O.~S.~M. Alves, E.~Lebensztayn, F.~P. Machado, and M.~Z. Martinez.
\newblock Random walks systems on complete graphs.
\newblock {\em Bulletin of the Brazilian Mathematical Society (N.S.)},
  37(4):571–580, 2006.

\bibitem[AMP02a]{phase_transition}
O.~S.~M. Alves, F.~P. Machado, and S.~Popov.
\newblock {Phase Transition for the Frog Model}.
\newblock {\em Electronic Journal of Probability}, 7(16):1 -- 21, 2002.

\bibitem[AMP02b]{shape_theorem}
O.~S.~M. Alves, F.~P. Machado, and S.~Popov.
\newblock The shape theorem for the frog model.
\newblock {\em Annals of Applied Probability}, 12(2):533–546, 2002.

\bibitem[BFHM20]{vida_fixa}
I.~Benjamini, L.~R. Fontes, J.~Hermon, and F.~P. Machado.
\newblock {On an epidemic model on finite graphs}.
\newblock {\em The Annals of Applied Probability}, 30(1):208 -- 258, 2020.

\bibitem[BFJ{\etalchar{+}}19]{recorrencia_drift5}
E.~Beckman, N.~Frank, Y.~Jiang, M.~Junge, and S.~Tang.
\newblock {The frog model on trees with drift}.
\newblock {\em Electronic Communications in Probability}, 24(26):1 -- 10, 2019.

\bibitem[CM23]{coverage}
G.~O. Carvalho and F.~P. Machado.
\newblock The coverage ratio of the frog model on complete graphs.
\newblock {\em Journal of Statistical Physics}, 190(147), 2023.

\bibitem[DGH{\etalchar{+}}18]{recorrencia_drift3}
C.~Döbler, N.~Gantert, T.~Höfelsauer, S.~Popov, and F.~Weidner.
\newblock {Recurrence and transience of frogs with drift on $\mathbb{Z} ^d$}.
\newblock {\em Electronic Journal of Probability}, 23(88):1--23, 2018.

\bibitem[Dur19]{ccp2}
R.~Durrett.
\newblock {\em Probability: Theory and Examples}.
\newblock Cambridge University Press, 5 edition, 2019.

\bibitem[FMS04]{phase_transition2}
L.~R. Fontes, F.~P. Machado, and A.~Sarkar.
\newblock The critical probability for the frog model is not a monotonic
  function of the graph.
\newblock {\em Journal of Applied Probability}, 41(1):292–298, 2004.

\bibitem[GNR17]{drift}
A.~Ghosh, S.~Noren, and A.~Roitershtein.
\newblock {On the range of the transient frog model on $\mathbb{Z}$}.
\newblock {\em Advances in Applied Probability}, 49(2):327--343, 2017.

\bibitem[GP22]{phase_transition5}
S.~Gallo and C.~Pena.
\newblock Critical parameter of the frog model on homogeneous trees with
  geometric lifetime.
\newblock {\em Journal of Statistical Physics}, 190(2), 2022.

\bibitem[Her18a]{vida_fixa2}
J.~Hermon.
\newblock {Frogs on trees?}
\newblock {\em Electronic Journal of Probability}, 23(17):1 -- 40, 2018.

\bibitem[Her18b]{frogs_on_trees}
J.~Hermon.
\newblock {Frogs on trees?}
\newblock {\em Electronic Journal of Probability}, 23(17):1 -- 40, 2018.

\bibitem[HJJ17]{recorrencia3}
C.~Hoffman, T.~Johnson, and M.~Junge.
\newblock Recurrence and transience for the frog model on trees.
\newblock {\em The Annals of Probability}, 45(5):2826–2854, 2017.

\bibitem[HJJ19a]{cover_time}
C.~Hoffman, T.~Johnson, and M.~Junge.
\newblock Cover time for the frog model on trees.
\newblock {\em Forum of Mathematics, Sigma}, 7(e41):1--49, 2019.

\bibitem[HJJ19b]{shape_theorem3}
C.~Hoffman, T.~Johnson, and M.~Junge.
\newblock {Infection spread for the frog model on trees}.
\newblock {\em Electronic Journal of Probability}, 24(112):1 -- 29, 2019.

\bibitem[HW16]{shape_theorem_drift}
T.~Höfelsauer and F.~Weidner.
\newblock {The speed of frogs with drift on $\mathbb{Z}$}.
\newblock {\em Markov Processes and Related Fields}, 22(2):379–392, 2016.

\bibitem[LE19]{grafo_completo_elcio}
E.~Lebensztayn and M.~A. Estrada.
\newblock Laws of large numbers for the frog model on the complete graph.
\newblock {\em Journal of Mathematical Physics}, 60:123302, 2019.

\bibitem[MMM11]{vertice_visitado}
F.~Machado, H.~Mashurian, and H.~Matzinger.
\newblock {CLT for the proportion of infected individuals for an epidemic model
  on a complete graph}.
\newblock {\em Markov Processes and Related Fields}, 17(2):209–224, 2011.

\bibitem[Pop01]{recorrencia2}
S.~Popov.
\newblock Frogs in random environment.
\newblock {\em Journal of Statistical Physics}, 102(1):191--201, 2001.

\bibitem[Ros18]{recorrencia_drift4}
J.~Rosenberg.
\newblock {The nonhomogeneous frog model on $\mathbb{Z}$}.
\newblock {\em Journal of Applied Probability}, 55(4):1093–1112, 2018.

\bibitem[TW99]{recorrencia}
A.~Telcs and N.~C Wormald.
\newblock Branching and tree indexed random walks on fractals.
\newblock {\em Journal of Applied Probability}, 36(4):999–1011, 1999.

\end{thebibliography}

\end{document}